\documentclass[11pt,a4paper, reqno]{amsart}
\usepackage{mathrsfs}

\usepackage{amsmath,amsfonts,verbatim}
\usepackage{latexsym}
\usepackage{amssymb,leftidx}
\usepackage{extarrows}
\usepackage{overpic}
\usepackage[dvipsnames]{xcolor}
\usepackage{epsfig}
\usepackage{subfigure}
\usepackage{tikz}
\usetikzlibrary{arrows.meta,decorations.pathreplacing}
\usepackage{enumitem}
\usepackage{mathtools}
\colorlet{RED}{red}

\usepackage{comment}

\usepackage{amssymb}
\usepackage{mathrsfs}
\usepackage{amscd}
\usepackage{bbm}
\usepackage{float}
\usepackage{cite}
\usepackage[pagebackref=true,colorlinks=false]{hyperref}
\hypersetup{
  pdfborder={0 0 0.65},
  linkbordercolor={1 0 0},
  citebordercolor={0 0.65 0},
  urlbordercolor={0 0 1},
  pdftitle={A sharp regularity threshold for Schrodinger maximal estimates on standard tori},
  pdfauthor={Xi Cen, Xitao Gao, Junyong Zhang}
}
\usepackage{microtype}
\newcommand{\arxiv}[1]{\href{https://arxiv.org/abs/#1}{arXiv\,\nolinkurl{#1}}}
\renewcommand*{\backref}[1]{}

\renewcommand*{\backrefalt}[4]{%
  \ifcase #1\relax
  \or\space{\footnotesize #2}
  \else\space{\footnotesize #2}
  \fi}

\newcommand\R{\mathbb{R}}

\newcommand\Z{\mathbb{Z}}
\newcommand\N{\mathbb{N}}
\newcommand{\F}{\mathbb F}

\newcommand{\T}{\mathbb T}
\newcommand{\supp}{\operatorname{supp}}

\newcommand\A{\bf A}

\newcommand{\dd}{\,\mathrm d}

\DeclareRobustCommand{\Figref}[1]{\mbox{\hyperref[#1]{Figure~\ref*{#1}}}}
\DeclareRobustCommand{\Figrange}[2]{\mbox{\hyperref[#1]{Figures~\ref*{#1}--\ref*{#2}}}}
\DeclareRobustCommand{\Extref}[2]{\mbox{\hyperlink{cite.#1}{#2}}}
\DeclareRobustCommand{\Thmref}[1]{\mbox{\hyperref[#1]{Theorem~\ref*{#1}}}}
\DeclareRobustCommand{\Lemref}[1]{\mbox{\hyperref[#1]{Lemma~\ref*{#1}}}}
\DeclareRobustCommand{\Propref}[1]{\mbox{\hyperref[#1]{Proposition~\ref*{#1}}}}
\DeclareRobustCommand{\Corref}[1]{\mbox{\hyperref[#1]{Corollary~\ref*{#1}}}}
\DeclareRobustCommand{\Remref}[1]{\mbox{\hyperref[#1]{Remark~\ref*{#1}}}}
\DeclareRobustCommand{\Secref}[1]{\mbox{\hyperref[#1]{Section~\ref*{#1}}}}
\DeclareRobustCommand{\Subsecref}[1]{\mbox{\hyperref[#1]{Subsection~\ref*{#1}}}}

\usepackage{graphicx}

\numberwithin{equation}{section}
\newtheorem{proposition}{Proposition}[section]

\newtheorem{lemma}{Lemma}[section]
\newtheorem{theorem}{Theorem}[section]
\newtheorem{corollary}{Corollary}[section]
\newtheorem{remark}{Remark}[section]

\begin{document}

\title[Schr\"odinger maximal estimates on standard tori]{\bf A sharp regularity threshold for Schr\"odinger maximal estimates on standard tori}

\author[X. Cen]{Xi Cen}
\address{Xi Cen, School of Science, China University of Mining and Technology-Beijing, Beijing 100083, People's Republic of China}
\email{xicenmath@gmail.com}

\author[X. Gao]{Xitao Gao}
\address{Xitao Gao, Department of Mathematics, Beijing Institute of Technology, Beijing 100081, People's Republic of China}
\email{xitao\_gao@bit.edu.cn}

\author[J. Zhang]{Junyong Zhang}
\address{Junyong Zhang, Department of Mathematics, Beijing Institute of Technology, Beijing 100081, People's Republic of China}
\email{zhang\_junyong@bit.edu.cn}

\begin{abstract}
We disprove almost everywhere convergence of the Schr\"odinger evolution on the standard torus \(\mathbb{T}^d\) for initial data \(f\in H^s(\mathbb{T}^d)\) when \(s<d/(d+2)\), in all dimensions \(d\ge2\). We construct normalized data with frequencies of size $N$ whose evolution attains size $N^{d/(d+2)}$ on a set of uniformly positive measure. The key ingredient in the construction is that the frequencies lie in an affine congruence class, so that at suitable rational times their phases coincide on a family of well-separated spatial points. The result matches the known estimate for $s>d/(d+2)$ with $d\geq2$. By integer dilation and uniform boundedness, we also obtain a single datum in $H^s$ whose evolution is unbounded along a sequence of times tending to zero whenever $s<d/(d+2)$. We also record a logarithmic upper bound in $2D$ at the critical frequency power and a lower bound on shrinking time intervals.
\end{abstract}

\maketitle

\begin{center}
 \begin{minipage}{120mm}
   { \small {\bf Key Words:  Maximal estimates,  pointwise convergence problem,    Schr\"odinger equation}
      {}
   }\\
    { \small {\bf AMS Classification:}
      { 42B37, 35Q40, 35Q41.}
      }
 \end{minipage}
 \end{center}

\section{Introduction}\label{sec:intro}
\subsection{Background and motivations}\label{sub:background}
We study the pointwise convergence of the Schr\"odinger evolution on the standard torus $\T^d\coloneqq(\R/2\pi\Z)^d$ with $d\ge2$. For a trigonometric polynomial $f$, the solution of
\[
 i\partial_tu+\Delta u=0,\qquad u(0,x)=f(x),
\]
is given by
\[
 e^{it\Delta}f(x)=\sum_{n\in\Z^d}\widehat f(n)e^{i(n\cdot x-t|n|^2)},
\]
where the Fourier coefficients are defined by
\[
\widehat f(n)=\frac{1}{(2\pi)^d}\int_{[0,2\pi)^d} f(x)e^{-in\cdot x}\,\dd x,
\qquad n\in\mathbb{Z}^d,
\]
and
\[
n\cdot x=n_1x_1+\cdots+n_dx_d.
\]
The \emph{Carleson convergence problem} asks for the minimal Sobolev regularity $s$ for the initial data $f\in H^s$ that ensures
\[
 \lim_{t\to0^+}e^{it\Delta}f(x)=f(x)\qquad\text{for almost every }x\in\T^d.
\]
The associated maximal estimate must control the supremum in time before integration in space. This order matters, since the time at which the solution is large may depend on the spatial point.

In all dimensions \(d\ge1\), the Carleson problem for the Schr\"odinger equation in Euclidean space is well understood away from the endpoint.
In one dimension, Carleson \cite{Carl} proved that the convergence result holds for \(s\ge 1/4\). Later, Dahlberg--Kenig \cite{D-K} proved that the exponent \(1/4\) is sharp, i.e. the result fails for \(s<1/4\).
For higher dimensions \(d\ge2\), the situation becomes more complicated. See Sj\"olin \cite{Sjolin}, Vega \cite{Vega}, and Lee \cite{Lee}. 
Bourgain \cite{B} proved the necessary condition $s\ge d/[2(d+1)]$. Du--Guth--Li \cite{DGL} in 2D and Du--Zhang \cite{DZ} in higher dimensions proved the matching sufficient condition. In particular, for $s>d/[2(d+1)]$, they proved
\begin{equation}\label{eq:euclidean-upper}
\left\|\sup_{0<t<1}|e^{it\Delta_{\R^d}}f|\right\|_{L^2(B(0,1))}
 \lesssim_{d,s}\|f\|_{H^s(\R^d)}.
\end{equation}
Hence, \begin{equation}\label{eq:euclidean-convergence}
\lim_{t\to0} e^{it\Delta} f(x)=f(x)\quad \text{a.e. } x\in\mathbb{R}^d
\end{equation}
for every \(f\in H^s(\mathbb{R}^d)\). The endpoint case \(s=d/[2(d+1)]\) remains open for \(d\ge2\).
The following \Figref{fig:euclidean} records these results, with the threshold marked in red.

\begin{figure}[H]
\centering
\begin{tikzpicture}[x=1cm,y=1cm,font=\small]
\draw[<->,>=Stealth,thick] (0,0)--(13,0);
\node[below] at (0,-.06) {$-\infty$};
\node[below] at (13,-.06) {$+\infty$};
\node[above] at (13,.05) {$s$};
\draw[decorate,decoration={brace,amplitude=5pt}] (.3,.38)--(6.2,.38);
\draw[decorate,decoration={brace,amplitude=5pt}] (6.8,.38)--(12.7,.38);
\node[align=center,anchor=south,text width=6cm] at (3.2,.72) {Bourgain \cite{B}\\Failure of the local maximal estimate};
\node[align=center,anchor=south,text width=6cm] at (9.8,.72) {Du--Guth--Li $(d=2)$ \cite{DGL}\\Du--Zhang $(d\ge3)$ \cite{DZ}\\Local maximal estimate\\and a.e. convergence};
\fill[red!80!black] (6.5,0) circle (2.7pt);
\node[below,align=center] at (6.5,-.1) {$\displaystyle\frac{d}{2(d+1)}$};
\node[align=center,text width=13cm,font=\footnotesize] at (6.5,-1.45) {Both ranges are strict. The red point marks the threshold, not an endpoint assertion.};
\end{tikzpicture}
\caption{The Euclidean problem on $\R^d$, $d\ge2$. The maximal norm is taken on a fixed spatial ball.}\label{fig:euclidean}
\end{figure}

In contrast, before the present work, the periodic problem remained open in all dimensions. Almost everywhere convergence was known for $s>d/(d+2)$ \cite{CLS,MV,WZ}, and counterexamples were known for $s<d/[2(d+1)]$ \cite{CLS,EL}. Thus the interval $s\in[d/[2(d+1)],d/(d+2)]$ remained unresolved. Partial progress on Weyl sums and level sets was obtained in \cite{Bar,Dem,MYZ}.

On the standard 2D torus, Wang--Zhang \cite[\Extref{WZ}{Theorem~1.1}]{WZ} proved almost everywhere convergence for $s>1/2$. On higher-dimensional tori, Compaan--Luc\`a--Staffilani \cite[\Extref{CLS}{Proposition~3.1}]{CLS} proved
\begin{equation}\label{eq:known-upper}
 \left\|\sup_{0<t<1}|e^{it\Delta}f|\right\|_{L^2(\T^d)}
 \lesssim_{d,s}\|f\|_{H^s(\T^d)}
 \qquad\left(s>\frac{d}{d+2}\right).
\end{equation}
The difference between these two exponents leaves a natural question. Does the periodic problem have the same threshold as the Euclidean problem, or can the arithmetic of the integer frequencies force a larger loss?

The arithmetic of periodic frequencies has also been studied through Weyl sums and level sets. Demeter \cite{Dem} obtained essentially sharp $L^4$ level-set estimates on $\T^1$ in a specified range of levels. This 1D result and the special-coefficient estimates discussed below address different parts of the periodic problem.

Two conjectures give a precise form of the first possibility. Miao--Yuan--Zhao--Barron \cite[p.~3, (1.6) and the following paragraph]{MYZ} proposed the periodic maximal estimate for arbitrary Fourier coefficients with regularity above $d/[2(d+1)]$. In their normalization, the conjectured estimate is
\begin{equation}\label{eq:conjectured}
 \left\|\sup_{0<\tau<1}\left|\sum_{|n|\le N}a_n
 e^{2\pi i(n\cdot y+\tau|n|^2)}\right|\right\|_{L^r((\R/\Z)^d)}
 \lesssim_{s,r}N^s\left(\sum_n|a_n|^2\right)^{1/2}
\end{equation}
for $1\le r\le2$ and $s>d/[2(d+1)]$. Eceizabarrena--Luc\`a \cite[Introduction, (1.1) and (1.4)]{EL} conjectured that their periodic divergence threshold
\[
 s=\frac{d}{2(d+1)}(d+1-\alpha),\qquad 0<\alpha\le d,
\]
is optimal. Taking $\alpha=d$ gives the same proposed threshold for almost everywhere convergence. In particular, both conjectures predict $1/3$ in 2D.

Below this proposed threshold, Compaan--Luc\`a--Staffilani \cite[\Extref{CLS}{Proposition~3.2}]{CLS} had already proved failure of the periodic maximal estimate. Eceizabarrena--Luc\`a \cite[\Extref{EL}{Theorem~1.1}]{EL}, with $\alpha=d$, constructed data whose evolution diverges on a set of positive measure. \Figref{fig:conjecture} shows this proved negative range, the previously open middle interval, and the known positive range above $d/(d+2)$.

\begin{figure}[H]
\centering
\begin{tikzpicture}[x=1cm,y=1cm,font=\small]
\draw[<->,>=Stealth,thick] (0,0)--(13,0);
\node[below] at (0,-.06) {$-\infty$};
\node[below] at (13,-.06) {$+\infty$};
\node[above] at (13,.05) {$s$};
\draw[decorate,decoration={brace,amplitude=5pt}] (.2,.38)--(4.1,.38);
\draw[decorate,dashed,blue!65!black,decoration={brace,amplitude=5pt}] (4.5,.38)--(8.5,.38);
\draw[decorate,decoration={brace,amplitude=5pt}] (8.9,.38)--(12.8,.38);
\node[align=center,anchor=south,font=\footnotesize,text width=4.1cm] at (2.1,.72) {\textbf{Known failure}\\Maximal estimate \cite{CLS}\\Divergent data \cite{EL}};
\node[align=center,anchor=south,font=\footnotesize,text width=4.1cm,blue!65!black] at (6.5,.72) {\textbf{Previously open}\quad ?\\Conjectured to hold\\Miao--Yuan--Zhao--Barron \cite{MYZ}\\Eceizabarrena--Luc\`a \cite{EL}};
\node[align=center,anchor=south,font=\footnotesize,text width=4.1cm] at (10.9,.72) {\textbf{Known validity}\\Maximal estimate\\and a.e. convergence\\Compaan--Luc\`a--\\Staffilani \cite{CLS}};
\fill[red!80!black] (4.3,0) circle (2.7pt);
\fill[red!80!black] (8.7,0) circle (2.7pt);
\node[below,align=center] at (4.3,-.1) {$\displaystyle\frac{d}{2(d+1)}$};
\node[below,align=center] at (8.7,-.1) {$\displaystyle\frac{d}{d+2}$};
\node[align=center,text width=13cm,font=\footnotesize] at (6.5,-1.45) {Historical ranges are strict. The dashed middle interval was conjectured to satisfy the estimates.};
\end{tikzpicture}
\caption{The periodic problem before this work. The outer intervals were known, while the middle interval was open and conjectured to admit maximal estimates and a.e. convergence. The conjectures are stated in \cite[p.~3, after (1.6)]{MYZ} and \cite[Introduction, (1.4), $\alpha=d$]{EL}. Neither marked equality case is included in the three intervals.}\label{fig:conjecture}
\end{figure}

In this paper, we disprove the conjectured validity on $d/[2(d+1)]<s<d/(d+2)$. We show that the larger exponent in \eqref{eq:known-upper} is necessary on the standard torus. Our examples use a sparse affine set of integer frequencies. At selected rational times, the quadratic phase becomes constant on this set after a spatial translation. We choose enough translations to cover a fixed proportion of the torus by disjoint neighborhoods of radius comparable to $N^{-1}$. On each neighborhood the evolved sum has size $N^{d/(d+2)}$. The positive measure of these sets is also what allows us to obtain a single divergent datum, rather than only a sequence of large maximal norms.


Recent work also considers related geometries and randomized data. Bhimani--Choudhary \cite{BC} proved a frequency-localized maximal estimate on $B^n(0,1)\times\T^m$ and almost everywhere convergence on $\R^n\times\T^m$ for
\[
 s>\frac{n+m}{n+m+2}.
\]
For $n,m\ge1$, their \Extref{BC}{Theorem~1.3} gives counterexamples below $(n+m)/[2(n+m+1)]$. Their \Extref{BC}{Remark~1.4} leaves a gap between these two ranges. Eceizabarrena--Merino \cite{EM} proved almost sure pointwise convergence for the 2D periodic quintic nonlinear Schr\"odinger equation with Gaussian Fourier data of positive regularity. Their statement concerns almost every random choice of the coefficients. Our question concerns every deterministic datum in a Sobolev space, and a counterexample to such a statement is compatible with almost sure convergence under a particular randomization.

\subsection{Main result}\label{sub:results}
We use normalized Haar measure on $\T^d$ and the Sobolev norm
\[
 \|f\|_{H^s(\T^d)}^2\coloneqq
 \sum_{n\in\Z^d}(1+|n|^2)^s|\widehat f(n)|^2.
\]
If $s>d/(d+2)$, the known estimate \eqref{eq:known-upper} and approximation by trigonometric polynomials give
\[
 \lim_{t\to0^+}e^{it\Delta}f(x)=f(x)
 \qquad\text{for almost every }x\in\T^d
 \quad\text{for every }f\in H^s(\T^d).
\]
Our main result shows that this convergence can fail below $d/(d+2)$.

\begin{theorem}[Failure of pointwise convergence]\label{thm:threshold}
Let $d\ge2$ and $s<d/(d+2)$. The maximal operator
\[
 f\longmapsto\sup_{0<t<1}|e^{it\Delta}f|
\]
is not bounded from $H^s(\T^d)$ to $L^{2,\infty}(\T^d)$, even on trigonometric polynomials.
Moreover, there exist $f\in H^s(\T^d)\cap L^2(\T^d)$, a measurable set $E\subseteq\T^d$ of positive measure, and a sequence $t_j\to0^+$ such that
\begin{equation}\label{eq:divergence}
 \limsup_{j\to\infty}|e^{it_j\Delta}f(x)|=\infty
 \qquad\text{for every }x\in E.
\end{equation}
In particular, $e^{it\Delta}f(x)$ does not converge to $f(x)$ as $t\to0^+$ for $x\in E$.
\end{theorem}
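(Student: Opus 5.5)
The plan is an explicit Knapp-type example built on an affine congruence class of frequencies, followed by a standard upgrade from failure of the maximal estimate to divergence of a single datum.

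\textbf{The example.} Fix a large $N$ and choose integers $q\approx N^{d/(d+2)}$ and $M\approx N/q\approx N^{2/(d+2)}$. Take a vector $a\in\Z^d$ with $|a|\lesssim N$, to be fixed below. Set
\[
f_N(x)=M^{-d/2}\sum_{m\in\{1,\dots,M\}^d}e^{i(a+qm)\cdot x},
\]
so that $\|f_N\|_{L^2}=1$ and $\|f_N\|_{H^s}\approx N^s$. For $n=a+qm$ we have $|n|^2=|a|^2+2q\,a\cdot m+q^2|m|^2$. At the rational time $t_r=2\pi r/q^2$ with $1\le r\le cq^2$ (so $t_r<1$), the term $t_rq^2|m|^2$ lies in $2\pi\Z$. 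Hence, modulo $2\pi$,
\[
n\cdot x-t_r|n|^2\equiv \text{const}+m\cdot\bigl(qx-4\pi r a/q\bigr).
\]
This is independent of $m$ exactly when $x\in x_{r,k}\coloneqq 4\pi ra/q^2+2\pi k/q$ with $k\in\Z^d$. For $|x-x_{r,k}|\le c_0/N$ the remaining phases $qm\cdot(x-x_{r,k})$ are all $\le 1/10$, since $|qm|\lesssim N$. Therefore
\[
|e^{it_r\Delta}f_N(x)|\gtrsim M^{-d/2}M^d=M^{d/2}=N^{d/(d+2)}.
\]

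\textbf{Covering a positive proportion of the torus.} Let $E_N$ be the union over $r\le cq^2$ and $k$ of the balls $B(x_{r,k},c_0/N)$. I claim $|E_N|\gtrsim1$. Modulo the lattice $(2\pi/q)\Z^d$ there are about $q^2$ points $4\pi ra/q^2$, each carrying a ball of volume about $N^{-d}$. The cell has volume about $q^{-d}$, and $q^2N^{-d}\approx q^{-d}$ precisely because $q^{d+2}\approx N^d$. What remains is to choose $a$ so that these $q^2$ points are roughly $N^{-1}$-separated modulo $(2\pi/q)\Z^d$. I would first try a Kronecker-type choice $a=(1,L,L^2,\dots,L^{d-1})$ with $L\approx q^{2/d}$. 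With this choice, $r\mapsto (2ra/q^2 \bmod 1/q)$ traces out an approximate product grid: the first coordinate $2r/q^2$ runs slowly, while the later coordinates wrap around at successive scales $L^{-j}$. A counting argument, or a second-moment bound on the number of near-coincidences $r\ne r'$, then gives $|E_N|\gtrsim1$.

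This separation/equidistribution step is the main obstacle. I would handle it by bounding $\#\{(r,r'):\|2(r-r')a/q^2\|_{\T^d/q}\lesssim N^{-1}\}\lesssim q^2$ and applying Cauchy--Schwarz to the sum of indicator functions of the balls. Once $|E_N|\gtrsim1$, we get $|\{\sup_t|e^{it\Delta}f_N|\gtrsim N^{d/(d+2)}\}|\gtrsim1$. Consequently the weak-type $H^s\to L^{2,\infty}$ bound would force $N^{d/(d+2)}\lesssim N^s$, which is false for $s<d/(d+2)$.

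\textbf{The divergent datum.} The failure of weak-type bounds on sets of uniformly positive measure, for a translation-commuting family, is the setting of the Stein--Nikishin / uniform boundedness principle. I would use it as follows. First, restrict the supremum to times $t\le\delta$. Replacing $f_N(x)$ by $f_N(Kx)$ for an integer $K$ rescales time by $K^{-2}$, keeps the measure of $E_N$, and multiplies the $H^s$ norm by at most $K^s$; absorbing this loss by taking $N$ larger keeps the ratio $N^{d/(d+2)-s}$ unbounded. So the local-in-time maximal operators $\sup_{0<t<\delta}$ are unbounded for every $\delta$. Next, apply the Banach--Steinhaus/Baire argument (or an explicit lacunary sum $f=\sum_j 2^{-j}\epsilon_j f_{N_j}(K_jx)$ with rapidly growing $N_j$ and random translations, so that the sets $E_{N_j}$ recur on a common set of positive measure). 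This produces one $f\in H^s$, a set $E$ of positive measure, and times $t_j\to0$ satisfying \eqref{eq:divergence}. Finally, convergence to $f(x)$ would force boundedness, so $e^{it\Delta}f(x)$ does not converge to $f(x)$ on $E$.
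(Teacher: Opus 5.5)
There is a genuine gap at the step you flagged, and for your frequency set it cannot be repaired by choosing $a$ cleverly: the separation claim is false. Every frequency satisfies $n=a+qm\equiv a\pmod q$, so $|n|^2\equiv|a|^2\pmod q$ and
\[
 e^{i(t+2\pi/q)\Delta}f_N=e^{-2\pi i|a|^2/q}\,e^{it\Delta}f_N\qquad\text{for all }t .
\]
Thus $|e^{it\Delta}f_N|$ is $2\pi/q$-periodic in $t$, and your $cq^2$ times $t_r$ give only about $q$ distinct profiles. Equivalently, $4\pi qa/q^2=(2\pi/q)(2a)\in(2\pi/q)\Z^d$, so $r\mapsto4\pi ra/q^2$ modulo $(2\pi/q)\Z^d$ has period dividing $q$. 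Each coordinate of $2ra/q^2$ modulo $1/q$ equals $(2ra_i\bmod q)/q^2$ and takes at most $q$ values, so your ``slowly running first coordinate'' disappears after reduction. Hence there are at most $q^{d+1}$ distinct centers $x_{r,k}$, and $|E_N|\lesssim q^{d+1}N^{-d}\eqsim q^{-1}\to0$. The pair count you wanted to bound by $q^2$ is in fact of order $q^3$, since all pairs with $r\equiv r'\pmod q$ coincide. The set you construct only yields $\lambda|E_N|^{1/2}\approx N^{d/(2(d+2))}$, which is weaker than what was already known.

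In the paper's language, your frequencies form a class $a+\Lambda_V$ modulo $Q=q^2$ with $V=q(\Z/q^2\Z)^d$. This $V$ is self-orthogonal, which is why the phases align at the centers. But $(\Z/q^2\Z)^d/V\cong(\Z/q\Z)^d$ has exponent $q$, so $a+V$ has order at most $q$ instead of $Q$. The missing idea is a self-orthogonal $V$ with $|V|=Q^{d/2}$ and a coset $r_0+V$ of full order $Q$ in the quotient. Then the centers $2\pi(2ar_0+u)/Q$, with $1\le a\le Q/(4\pi)$ and $u\in V$, number $\eqsim Q^{(d+2)/2}\eqsim N^d$. The paper builds this from $j^2=-1$:
\begin{itemize}
\item for $d=2m$, take $Q=p$ and $V=\{(js_1,s_1,\dots,js_m,s_m)\}\subseteq\F_p^{2m}$;
\item for odd $d$, take $Q=p^2$ and $V=L^m\times p\Z/p^2\Z$, where $L=\{(js,s)\}$ and $j^2\equiv-1\pmod{p^2}$. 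The modulus $p^2$ is needed because an odd-dimensional space over $\F_p$ has no self-dual subspace. The quotient still contains elements of order $p^2$.
\end{itemize}
Even then a separation lemma is needed. The vector $r_0$ is chosen so that $V+\langle r_0\rangle$ contains no nonzero integer vector with $|h|_\infty\le\gamma Q/N$. This is done by counting the projective classes (even $d$) or cyclic subgroups of order $p^2$ (odd $d$) that short vectors can hit.

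Your upgrade to a single divergent datum is essentially the paper's: integer dilation $F(Kx)$ moves the finite time sets to $0$, and then a Baire/uniform boundedness argument is applied to the countable family $e^{it_j\Delta}$. It would work once a genuine positive-measure lower bound is in hand.
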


\begin{remark}
The assertion about the maximal operator means that no constant $C$ makes
\[
 \left\|\sup_{0<t<1}|e^{it\Delta}f|\right\|_{L^{2,\infty}(\T^d)}
 \le C\|f\|_{H^s(\T^d)}
\]
hold for every trigonometric polynomial $f$. This formulation also applies when $s<0$. The divergent datum in the theorem belongs to $L^2$, and its values at the times $t_j$ are given by the almost everywhere limits of the full Fourier cube truncations. We justify this identification in the proof.
\end{remark}

\begin{remark}
Together with the known convergence for $s>d/(d+2)$, the theorem gives the threshold $d/(d+2)$ away from equality. In 2D it is $1/2$. The estimate and convergence at $s=d/(d+2)$ remain undecided here.
\end{remark}

\begin{figure}[H]
\centering
\begin{tikzpicture}[x=1cm,y=1cm,font=\small]
\draw[<->,>=Stealth,thick] (0,0)--(13,0);
\node[below] at (0,-.06) {$-\infty$};
\node[below] at (13,-.06) {$+\infty$};
\node[above] at (13,.05) {$s$};
\draw[decorate,decoration={brace,amplitude=5pt}] (.3,.38)--(6.2,.38);
\draw[decorate,decoration={brace,amplitude=5pt}] (6.8,.38)--(12.7,.38);
\node[align=center,anchor=south,text width=6cm] at (3.2,.72) {This paper, \Thmref{thm:threshold}\\Weak $L^2$ maximal estimate fails\\Some datum diverges\\on a set of positive measure};
\node[align=center,anchor=south,text width=6cm] at (9.8,.72) {Compaan--Luc\`a--Staffilani \cite{CLS}\\$L^2$ maximal estimate\\A.e. convergence\\for every $H^s$ datum};
\fill[red!80!black] (6.5,0) circle (2.7pt);
\node[below,align=center] at (6.5,-.1) {$\displaystyle\frac{d}{d+2}$};
\node[align=center,text width=13cm,font=\footnotesize] at (6.5,-1.45) {Both ranges are strict. The equality case is not decided in this paper.};
\end{tikzpicture}
\caption{The standard torus $\T^d$, $d\ge2$. The known upper bound and the new counterexamples have the same non-endpoint threshold.}\label{fig:torus}
\end{figure}

\begin{remark}
\Figrange{fig:euclidean}{fig:torus} concern Sobolev regularity, rather than a spatial integrability exponent. Above the periodic threshold, the maximal estimate holds and every $H^s$ datum converges almost everywhere. Below it, the weak maximal estimate fails and there exists a datum for which convergence fails on a set of positive measure. The latter statement does not assert divergence for every datum.
\end{remark}

\begin{remark}
Recent work on Strichartz estimates \cite{DFGGL}, combined with Bernstein's inequality in time, yields a logarithm-free \(L^2\) maximal estimate with frequency factor \(N^{1/3}\) in one dimension. However, this result should not be regarded as the sharp threshold for the one-dimensional \(L^2\) case. Our higher-dimensional counterexample cannot be directly reduced to lower dimensions, because the one-dimensional quotient group has no element of order \(p^2\), and the repetition of spatial centers causes the positive-measure counting to fail.
\end{remark}

\subsection{Organization}\label{sub:organization}
We prove \Thmref{thm:threshold} in \Secref{sec:threshold}. We first state the positive-measure lower bound needed for the weak maximal estimate. Integer dilation then moves its finite sets of bad times towards zero. A uniform boundedness lemma gives one initial datum whose evolution is unbounded along those times. We identify the Fourier-sum representative and conclude the failure of pointwise convergence before proving the supporting lemmas.

\Secref{sec:construction} gives the arithmetic construction, first in 2D and then in even and odd dimensions. We prove each separation lemma after using it. The supplementary frequency estimate and shrinking-interval consequence are stated and proved in \Secref{sec:log}. The implication from the known upper estimate to pointwise convergence above $d/(d+2)$ was recalled in \Subsecref{sub:results}.

\subsection{Notation}\label{sub:notation} We close this section by introducing and recalling some notation.
\begin{itemize}
\item All integrals on $\T^d$ use the measure $(2\pi)^{-d}\dd x$. Our Fourier convention gives
\[
 \widehat f(n)=(2\pi)^{-d}\int_{[0,2\pi)^d}f(x)e^{-in\cdot x}\dd x,
 \qquad\text{and}\qquad
 \|f\|_2^2=\sum_{n\in\Z^d}|\widehat f(n)|^2.
\]
\item We write $A\lesssim_d B$ if $A\le C_dB$, with $C_d$ independent of the frequency and the prime, and $A\eqsim_d B$ if both inequalities hold. Additional subscripts indicate further dependence. A norm without a domain is taken on the torus under consideration.
\item We use Euclidean length $|n|=\big(\sum_{j=1}^d|n_j|^2\big)^{1/2}$ for frequency annuli and the maximum norm $|n|_\infty=\max_{1\leq j\leq d}|n_j|$ for Fourier cubes. The latter truncations are
\begin{equation}\label{SL}
 S_L(t)f\coloneqq\sum_{|n|_\infty\le L}\widehat f(n)e^{i(n\cdot x-t|n|^2)}.
\end{equation}
 The weak $L^2$ norm is
\begin{equation}\label{w-L2}
 \|g\|_{L^{2,\infty}(\T^d)}\coloneqq
 \sup_{\lambda>0}\lambda\,|\{x\in\T^d\mid |g(x)|>\lambda\}|^{1/2}.
\end{equation}
\end{itemize}

\section{Proof of \Thmref{thm:threshold}}\label{sec:threshold}

In this section, we prove the main \Thmref{thm:threshold}. The key ingredients are the following two propositions.

The first proposition establishes a lower bound for a set of fixed positive measure, with only finitely many times at each frequency scale. Its proof is postponed to \Secref{sec:construction}.

\begin{proposition}
[A positive-measure lower bound]\label{thm:lower}
Let $d\ge2$. There are constants $c_d,b_d>0$, an unbounded sequence of scales $N$, trigonometric polynomials $F_N$, and nonempty finite sets $B_N\subseteq(0,1/2]$ such that
\begin{gather}\label{FN}
 \supp\widehat F_N\subseteq\{n\in\Z^d\mid N\le|n|\le2N\},\\
 \|F_N\|_2=1,
\end{gather}
and
\begin{equation}\label{eq:lower}
 \left|\left\{x\in\T^d\ \middle|\
 \max_{t\in B_N}|e^{it\Delta}F_N(x)|\ge c_dN^{\frac{d}{d+2}}
 \right\}\right|\ge b_d.
\end{equation}
\end{proposition}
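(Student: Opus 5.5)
The plan is to take $F_N$ to be the normalized indicator, in frequency, of an affine lattice coset intersected with the annulus. Concretely, I would fix a sublattice $L\subseteq\Z^d$, a residue $a\in\Z^d$ and a modulus $Q$, set $\Lambda=(a+L)\cap\{N\le|n|\le2N\}$, and put
\[
 F_N(x)=(\#\Lambda)^{-1/2}\sum_{n\in\Lambda}e^{in\cdot x}.
\]
Then $\|F_N\|_2=1$ and the support condition \eqref{FN} hold automatically.

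The key algebraic identity is the following. For $n=a+\ell$ with $\ell\in L$,
\[
 n\cdot x-t|n|^2=n\cdot(x-2ta)+t|a|^2-t|\ell|^2 .
\]
Suppose $t\in\frac{2\pi}{Q}\Z$ and $L$ is chosen so that $|\ell|^2\in Q\Z$ for all $\ell\in L$. Then the last term is a multiple of $2\pi$, and so
\[
 |e^{it\Delta}F_N(x)|=|F_N(x-2ta)|.
\]
Next, $F_N$ is large near points $y$ of the dual lattice $L^*=\{y\mid \ell\cdot y\in2\pi\Z\ \forall \ell\in L\}$, because there $e^{in\cdot y}=e^{ia\cdot y}$ is constant on $\Lambda$. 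A standard Taylor/Dirichlet-kernel argument (all phases $n\cdot(x-y)$ vary by at most a small constant when $|x-y|\le c/N$) gives $|F_N(x)|\gtrsim(\#\Lambda)^{1/2}\eqsim(N^d/[\Z^d:L])^{1/2}$ on the balls $B(y,c/N)$, $y\in L^*$. The number of such balls modulo $2\pi\Z^d$ is $[\Z^d:L]$.

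The measure bound comes from counting. At each admissible time $t=2\pi k/Q\le1/2$, the large set contains $(L^*+2ta)+B(0,c/N)$. The translates $2ta$ are distinct modulo $L^*$ for as many $k$ as the order of the class of $4\pi a/Q$ in $(\R^d/L^*)$ allows. Call the number of pairwise separated translates $T$. If the centers $L^*+2ta$ for these $T$ times are mutually at distance $\gtrsim1/N$, the union has measure $\gtrsim T\,[\Z^d:L]\,N^{-d}$. We need this to be $\gtrsim1$ while the height is $N^{d/(d+2)}$. That forces
\[
 [\Z^d:L]\eqsim N^{d^2/(d+2)},\qquad T\eqsim N^{2d/(d+2)}.
\]
So with $L$ of index $\approx p^d$ we need about $p^2$ distinct shifts, i.e. an element of order $\approx p^2$ in $\R^d/L^*$ coming from $a$. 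The naive choice $L=p\Z^d$, $Q=p^2$ only gives $p$ shifts, since $2ka$ is taken mod $p$, and this reproduces the Euclidean exponent $d/[2(d+1)]$. So the arithmetic must be refined. I would take $Q=p^3$ or a similar power, and $L=pM$ with $M$ a sublattice on which the quadratic form is isotropic mod $p$. In 2D, with $p\equiv1\pmod 4$ and $i^2\equiv-1$, take $M=\{m\mid m_2\equiv i\,m_1 \ (\mathrm{mod}\ p)\}$; in higher even dimensions, take products of such planes; in odd dimensions, add one extra coordinate handled separately. Then $a$ is chosen so that $2a$ generates a cyclic subgroup of order $\gtrsim p^2$ in $\Z^d/(QL^*/2\pi)$, while the index of $L$ keeps the height exponent $d/(d+2)$ after optimizing $p$ against $N$.

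The main obstacle is the separation step. The shifted lattices $L^*+4\pi ka/Q$ for different $k$ must not only be distinct but $\gtrsim1/N$-separated, uniformly in $k\le cQ$, so that the balls are essentially disjoint. Equivalently, one needs a lower bound on $\operatorname{dist}(4\pi(k-k')a/Q,\,L^*)$ for $0<|k-k'|\lesssim T$, or at least a counting bound showing that only $O(1)$ coincidences per ball occur (a second-moment/overlap count also suffices for the measure bound). I would try to reduce this to a statement in the finite group $\Z^d/(Q/2\pi)L^*$: first show that $k\mapsto 2ka$ is injective on the relevant range, and then that nonzero elements of the group, lifted to $\R^d$, stay at distance $\gtrsim Q^{-1}$ in the appropriate scaling, which is $\gtrsim1/N$ once $Q\lesssim N$ is checked from the parameter choice. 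This verification, done separately in 2D, even $d$, and odd $d$, is where I expect the real work. The remaining inputs are routine: the local lower bound near dual points, $\#\Lambda\eqsim N^d/[\Z^d:L]$ for $N\gg p^{C}$, and taking $N$ along the sequence of primes $p$.
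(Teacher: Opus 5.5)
Your framework is the paper's own. With $L=\Lambda_V$ for a self-orthogonal $V$ modulo $Q$ one has $L^*=\frac{2\pi}{Q}L$. Your Galilean identity is the phase computation \eqref{eq:phase}, and your centers $L^*+2t\widetilde r_0$ are the paper's $x_\ell=2\pi(2ar_0+u)/Q$.

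\textbf{First gap: the arithmetic you propose cannot reach the exponent.} By your own bookkeeping you need $T\approx[\Z^d:L]^{2/d}$ distinct shifts. For $L=pM$ with $M$ isotropic mod $p$, you get $[\Z^d:L]\ge p^{3d/2}$, and with $Q=p^3$ you have $L^*=\frac{2\pi}{p^2}M=\frac{2\pi}{Q}L$. So the shifts are the multiples of $2a$ in $\Z^d/pM$. Since $p^2\Z^d\subseteq pM$, this group has exponent at most $p^2$. Hence $T\le p^2$ rather than the required $p^3$, and optimizing $N$ gives only height $p=N^{2d/(3d+4)}$ (e.g.\ $N^{2/5}$ in 2D).

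You need two things at once: $L\bmod Q$ must be Lagrangian (index $Q^{d/2}$), and $\Z^d/L$ must contain elements of order $\approx Q$. The extra factor $p$ destroys the second property. The working choices are:
\begin{itemize}
\item in even dimensions, $L=M$ itself with $Q=p$ (in 2D this gives $N\approx p$, $T\approx p$);
\item in odd dimensions, $Q=p^2$ with $V=L_0^m\times p\Z/p^2\Z$, where $L_0$ is an isotropic line modulo $p^2$ (this needs a square root of $-1$ modulo $p^2$). Then $|V|=p^d$ and $\Z^d/\Lambda_V\cong(\Z/p^2\Z)^m\times\F_p$ has elements of order $p^2$.
\end{itemize}

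\textbf{Second gap: separation.} You defer this step, but it is the heart of the proof for $d\ge3$, and the route you sketch cannot work there. Since $T\le Q/(4\pi)$ and you need $T\approx N^{2d/(d+2)}$, every admissible choice has $N\lesssim Q^{(d+2)/(2d)}$, which is $\ll Q$ when $d\ge3$. So the trivial spacing $2\pi/Q$ of $\frac{2\pi}{Q}\Z^d$ is far below $1/N$, and ``$Q\lesssim N$'' can be verified only in 2D. That is why the 2D case needs no separation lemma.

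What is missing is a choice of $a$, i.e.\ of the subgroup $U=V+\langle a\rangle$ modulo $Q$, such that no nonzero $h\in\Z^d$ with $h\bmod Q\in U$ has $|h|_\infty\le\gamma Q/N$. The paper obtains this by counting candidates.
\begin{itemize}
\item In even dimensions, $U=W_c^\perp$, and each short $h$ excludes only the class $[c]=[\lambda(h)]$. There are at least $p^{m-1}$ classes but only $O(\gamma^{2m}p^{m-1})$ short vectors.
\item In odd dimensions, one counts cyclic subgroups of order $p^2$ in $(\Z/p^2\Z)^m\times\F_p$. Short vectors of order $p$ each lie in $p^m$ such subgroups, so they require a separate lattice-point bound.
\end{itemize}
Your alternative overlap count is viable in principle. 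However, it amounts to this same averaging over $a$ and is the substance of the argument, not a routine check.
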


We use the finite time sets from \Propref{thm:lower} to obtain an initial datum whose evolution is unbounded as time tends to zero. We also need our second proposition, concerning uniform boundedness. Its proof is postponed for the moment.

\begin{proposition}[uniform boundedness]\label{prop:baire}
Let $E$ be a Banach space, let $\mu(Y)<\infty$, and let $T_j$ be continuous linear maps from $E$ to $L^0(Y,\mu)$, equipped with convergence in measure. Suppose that $\sup_j|T_jf|<\infty$ almost everywhere for every $f\in E$. Then, for each $\epsilon>0$, there is $\delta>0$ such that
\[
 \|f\|_E<\delta\quad\Longrightarrow\quad
 \mu\{\sup_j|T_jf|>1\}\le\epsilon.
\]
\end{proposition}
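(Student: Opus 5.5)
This is the Banach--Steinhaus principle in the form due to Stein, so I would argue through the Baire category theorem.

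\emph{Setup.} Consider the maximal map $M f\coloneqq\sup_j|T_jf|$, which is finite almost everywhere by hypothesis, and fix $\epsilon>0$. For $k\in\N$ define
\[
 A_k\coloneqq\Bigl\{f\in E\ \Bigm|\ \mu\{Mf>k\}\le\epsilon/2\Bigr\}.
\]
Because $\mu(Y)<\infty$ and $Mf<\infty$ almost everywhere, continuity from above gives $\mu\{Mf>k\}\to0$ as $k\to\infty$ for each fixed $f$. Hence $E=\bigcup_k A_k$.

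\emph{Closedness of $A_k$.} Write $\{Mf>k\}=\bigcup_J\{\max_{j\le J}|T_jf|>k\}$, an increasing union. Then $f\in A_k$ if and only if $\mu\{\max_{j\le J}|T_jf|>k\}\le\epsilon/2$ for every $J$, so it suffices to show that each of these sets is closed. Suppose $f_m\to f$ in $E$. Continuity gives $T_jf_m\to T_jf$ in measure for every $j\le J$, and so $\max_{j\le J}|T_jf_m|\to\max_{j\le J}|T_jf|$ in measure. Passing to an almost everywhere convergent subsequence and applying Fatou's lemma to the indicator of the open set $\{\cdot>k\}$, we get
\[
 \mu\{\max_{j\le J}|T_jf|>k\}\le\liminf_m\mu\{\max_{j\le J}|T_jf_m|>k\}\le\epsilon/2.
\]
This is the only delicate step: the strict inequality makes the superlevel set open, which is exactly what the liminf argument needs.

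\emph{Baire and rescaling.} By the Baire category theorem some $A_k$ contains a ball $B(f_0,r)$. Take $\|h\|<r$. Then $h=(f_0+h)-f_0$, and both $f_0+h$ and $f_0$ lie in $A_k$. Sublinearity gives $M h\le M(f_0+h)+Mf_0$, so
\[
 \mu\{Mh>2k\}\le\mu\{M(f_0+h)>k\}+\mu\{Mf_0>k\}\le\epsilon.
\]
Finally, $M$ is homogeneous, so $\{M(h/(2k))>1\}$ has measure at most $\epsilon$ whenever $\|h\|<r$. Taking $\delta\coloneqq r/(2k)$ and $f=h/(2k)$, every $f$ with $\|f\|_E<\delta$ satisfies $\mu\{Mf>1\}\le\epsilon$, which is the claim.
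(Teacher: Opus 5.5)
Your proof is correct and follows the paper's argument: the same sets $A_k$ with threshold $\epsilon/2$, closedness from an a.e.\ convergent subsequence and Fatou's lemma applied to strict superlevel sets, Baire's theorem, and the rescaling with $\delta=r/(2k)$. The only difference is cosmetic: you prove closedness through the finite maxima $\max_{j\le J}|T_jf|$ and continuity from below, whereas the paper extracts one diagonal subsequence along which $T_jf_n\to T_jf$ a.e.\ for every $j$ simultaneously.
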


Now we prove \Thmref{thm:threshold}.
\begin{proof}[Proof of \Thmref{thm:threshold}]
We first prove that the maximal operator
\[
 f\longmapsto\sup_{0<t<1}|e^{it\Delta}f|
\]
is not bounded from $H^s(\T^d)$ to $L^{2,\infty}(\T^d)$.
By definition \eqref{w-L2}, we apply \Propref{thm:lower} directly to obtain 
\begin{align*}
\left\|\sup_{0<t<1}|e^{it\Delta}F_N|\right\|_{L^{2,\infty}}
 &\ge\frac{c_dN^{\frac{d}{d+2}}}2
 \left|\left\{x\in\T^d \middle|\ \max_{t\in B_N}|e^{it\Delta}F_N(x)|>
 \frac{c_dN^{\frac{d}{d+2}}}2\right\}\right|^{1/2} \gtrsim_dN^{\frac{d}{d+2}}.
\end{align*}
By \eqref{FN}, annular localization gives
$\|F_N\|_{H^s}\eqsim_s N^s$. Thus, if $s<d/(d+2)$,
\begin{align*}
 \frac{N^{\frac{d}{d+2}}}{\|F_N\|_{H^s}}
 &\eqsim_sN^{\frac{d}{d+2}-s}\to\infty
\end{align*} as $N\to \infty$.
Thus the maximal estimate fails below $\frac{d}{d+2}$, even with a weak $L^2$ norm on the left. \vspace{0.2cm}

Next we prove \eqref{eq:divergence}. To make the times tend to zero, we use the integer covering maps of the torus. These preserve normalized Haar measure and satisfy, for $K\in\N$, $K\ge1$ and $s\ge0$,
\begin{equation}\label{eq:dilation}
 \begin{aligned}
 \|F(K\cdot)\|_2&=\|F\|_2,\\
 e^{it\Delta}[F(K\cdot)](x)&=(e^{iK^2t\Delta}F)(Kx),\\
 \|F(K\cdot)\|_{H^s}^2
 &=\sum_n(1+K^2|n|^2)^s|\widehat F(n)|^2
 \le K^{2s}\|F\|_{H^s}^2.
 \end{aligned}
\end{equation}
Fix $0<s<\frac{d}{d+2}$. We choose $K_m=m$, and then choose $N_m$ from the sequence in \Propref{thm:lower} sufficiently large. Define
\begin{align*}
 g_m(x)\coloneqq\frac{2}{c_dN_m^{\frac{d}{d+2}}}F_{N_m}(K_mx),\quad
 B'_m\coloneqq K_m^{-2}B_{N_m}.
\end{align*}
Then we obtain
\begin{align*}
 \|g_m\|_{H^s}
 &\le C_{d,s}K_m^sN_m^{s-\frac{d}{d+2}}\le2^{-m},\\
 B'_m&\subseteq(0,m^{-2}),
\end{align*}
and
\begin{align*}
 \left|\left\{x\in\T^d\ \middle|\ \max_{t\in B'_m}
 |e^{it\Delta}g_m(x)|>1\right\}\right|&\ge b_d.
\end{align*}
Enumerating the nonempty finite blocks $B'_m$ consecutively gives a sequence $t_j\to0^+$. Each $T_j=e^{it_j\Delta}$ maps $H^s$ continuously to $L^2$ and hence to $L^0$ (the space topologized by convergence in measure, since $L^2(\mathbb T^d)\hookrightarrow L^0(\mathbb T^d)$). If every datum had almost everywhere bounded evolution along this sequence, \Propref{prop:baire} with $\epsilon=b_d/2$ would contradict the preceding lower bound for all sufficiently large $m$. We conclude that some $f\in H^s$ satisfies
\[
 \left|\left\{x\in\T^d\ \middle|\ \sup_j|T_jf(x)|=\infty\right\}\right|>0.
\]
Each $T_jf$ is finite almost everywhere. Outside the union of these countably many null sets, every finite prefix is bounded. Hence
\[
 \limsup_{j\to\infty}|T_jf(x)|=\infty
\]
 on a set of positive measure.

We identify these time slices with the Fourier-sum representative in Eceizabarrena--Luc\`a \cite{EL}. Since $0<s<\frac{d}{d+2}<d/2$, their formula (2.1) and \Extref{EL}{Proposition~A.1}, with $\alpha=d$, apply at each fixed $t_j$. Together with Parseval, they give
\begin{align*}
 S_L(t_j)f(x)&\longrightarrow V_j(x)
 \qquad\text{for almost every }x,\\
 \|S_L(t_j)f-T_jf\|_2^2
 &=\sum_{|n|_\infty>L}|\widehat f(n)|^2\longrightarrow0,\\
 V_j(x)&=T_jf(x)\qquad\text{for almost every }x.
\end{align*}
Taking the countable union of the exceptional sets makes this identification simultaneous for every $j$. We remove this null set and the null set where $f$ is not finite from the positive-measure set of divergence, and call the remaining set $E$. Then \eqref{eq:divergence} holds at every point of $E$, and convergence to $f(x)$ is impossible there. Finally, if $s\le0$, we use the datum constructed at any fixed $\sigma\in(0,\frac{d}{d+2})$ and the inclusion $H^\sigma\subseteq H^s\cap L^2$.
\end{proof}

To complete the proof of our main \Thmref{thm:threshold},
it remains to prove \Propref{prop:baire} and \Propref{thm:lower}. We first prove \Propref{prop:baire}.
\begin{proof}[Proof of \Propref{prop:baire}]
Fix $\epsilon>0$. For each integer $a\ge1$, define
\[
 A_a\coloneqq\{f\in E\mid\mu\{\sup_j|T_jf|>a\}\le\epsilon/2\}.
\]
Let $f_n\in A_a$ converge to $f$ in $E$. By continuity, we can pass to one subsequence along which $T_jf_n\to T_jf$ almost everywhere for every $j$. The strict level sets then satisfy
\begin{align*}
 \boldsymbol1_{\{\sup_j|T_jf|>a\}}
 &\le\liminf_{n\to\infty}
 \boldsymbol1_{\{\sup_j|T_jf_n|>a\}}.
\end{align*}
By Fatou's lemma, we have
\begin{align*}
 \mu\{\sup_j|T_jf|>a\}
 &\le\liminf_{n\to\infty}\mu\{\sup_j|T_jf_n|>a\}
 \le\epsilon/2.
\end{align*}
Thus $f\in A_a$. Hence $A_a$ is closed. The assumed pointwise boundedness and finiteness of $\mu(Y)$ give $E=\bigcup_{a\ge1}A_a$. Since $E$ is a Banach space (complete) and $A_a$ is closed, Baire's theorem provides a ball $B_E(f_0,r)\subseteq A_a$ for some $a$. For $\|h\|_E<r$, linearity yields
\begin{align*}
 \sup_j|T_jh|
 &\le\sup_j|T_j(f_0+h)|+\sup_j|T_jf_0|,\\
 \mu\{\sup_j|T_jh|>2a\}
 &\le\mu\{\sup_j|T_j(f_0+h)|>a\}
   +\mu\{\sup_j|T_jf_0|>a\}
 \le\epsilon.
\end{align*}
Let $f=h/(2a)$. If $\|f\|_E<\delta$, where $\delta=r/(2a)$, then $\|h\|_E<r$ and $$\sup_j|T_jf|=\frac1{2a}\sup_j|T_jh|\implies \{\sup_j|T_jf|>1\}=\{\sup_j|T_jh|>2a\}.$$
Therefore
\begin{align*}
 \|f\|_E<\frac{r}{2a}
 &\Longrightarrow\mu\{\sup_j|T_jf|>1\}\le\epsilon.\qedhere
\end{align*}
\end{proof}

\begin{remark}[Comparison with the conjectured estimate]\label{rem:normalization}
The lower bound also applies in the normalization of \eqref{eq:conjectured}. For $a_n=\overline{\widehat F_N(n)}$, we have
\[
 \sum_na_ne^{2\pi i(n\cdot y+\tau|n|^2)}
 =\overline{e^{i2\pi\tau\Delta}F_N(-2\pi y)}.
\]
The times $\tau=t/(2\pi)$ with $t\in B_N$ belong to $(0,1)$, and the map $y\mapsto-2\pi y$ preserves normalized measure. Therefore, for $1\le r\le2$,
\[
 \left\|\sup_{0<\tau<1}\left|\sum_na_ne^{2\pi i(n\cdot y+\tau|n|^2)}\right|\right\|_r
 \ge c_db_d^{1/r}N^{\frac{d}{d+2}}.
\]
We choose coefficients with $\ell^2$ norm one that vanish for $|n|>2N$. Replacing $N$ by $2N$ in the conjectured upper bound gives a contradiction whenever $d/[2(d+1)]<s<\frac{d}{d+2}$.
\end{remark}

\section{Proof of \Propref{thm:lower}}\label{sec:construction}
We prove \Propref{thm:lower} by constructing frequencies whose quadratic phases agree at rational times. We first give the 2D example, where both the frequencies and the spatial centers can be written explicitly. We then arrange the same agreement in higher dimensions while keeping the centers separated.

We write $\F_p\coloneqq\Z/p\Z$ for a prime $p$. For an odd integer $q$, let $$e_q(a)\coloneqq e^{2\pi ia/q}.$$Then $e_q(a+q)=e_q(a)$. Orthogonality in $(\Z/q\Z)^d$ is always with respect to the dot product modulo $q$. Thus
\[
 V^\perp\coloneqq
 \{z\in(\Z/q\Z)^d: z\cdot v=0\pmod q\ \text{for every }v\in V\}.
\]
For a residue vector $v$, the expression $|v|^2$ modulo $q$ means $\sum_{r=1}^d v_r^2.$ Real lengths refer only to integer or real vectors. We choose representatives in $[-(q-1)/2,(q-1)/2]^d$ when measuring spatial separation, and take distances on the torus modulo $2\pi\Z^d$.

\subsection{The 2D example}\label{sub:two}
\begin{proof}[Proof of \Propref{thm:lower} in 2D]
We choose a sufficiently large prime $p\equiv1\pmod4$. Let $j\in\F_p$ satisfy $j^2=-1$, and write $v=(j,1)$ and $r=(1,0)$.
For $s\in\F_p$, let $n_s$ be the balanced integer representative of $$r+sv=(1,0)+s(j,1)=(1+sj,s),$$
that is, $$\Z^2\ni n_s\equiv (1+sj,s),\quad (\text{mod}\, p) .$$ Each coordinate belongs to $[-(p-1)/2, (p-1)/2]$. The second coordinate provides enough frequencies away from zero, since
\begin{align*}
 |n_s|&\le p/\sqrt2,\\
 |r+sv|^2&=1+2js+(1+j^2)s^2=1+2js\quad\text{in }\F_p,
\end{align*}
and
\begin{align*}
 \#\{s\in\F_p\mid |n_s|\ge p/4\}
 &\ge\#\{s\in\F_p\mid |(n_s)_2|\ge p/4\}
 =\frac{p-1}{2}\ge\frac p3.
\end{align*}
We split this set (the frequency set $|n_s|\ge p/4$) between the annuli with inner radii $p/4$ and $p/2$. One annulus contains $n_s$ for at least $p/6$ values of $s$. Let $S\subseteq \F_p$ be the set of these parameters, $M=\#S\ge p/6$, and $N$ the inner radius, so $N \eqsim p$. We define
\begin{align*}
 F_N(x)&\coloneqq M^{-1/2}\sum_{s\in S}e^{in_s\cdot x},\\
 \|F_N\|_2^2&=M^{-1}\sum_{s\in S}1=1.
\end{align*}
Its Fourier support lies in $N\le|n|\le2N$ since $N\le|n_s|\le2N$.

We now choose the time at each grid point. The key point is that the time need not be the same for all spatial points. For $\ell=(\ell_1,\ell_2)\in\F_p^2$, let $x_\ell=2\pi\ell/p$. For $a\in\F_p$, represented in $\{0,\ldots,p-1\}$, define $t_a=2\pi a/p$. The phase is
\begin{align*}
 \ell\cdot n_s-a|n_s|^2
 &\equiv\ell_1-a+s(j\ell_1+\ell_2-2aj)\pmod p,\\
 a(\ell)&\coloneqq(2j)^{-1}(j\ell_1+\ell_2),
\end{align*}
which gives
\begin{align*}
 e^{it_{a(\ell)}\Delta}F_N(x_\ell)
 &=M^{-1/2}\sum_{s\in S}e_p(\ell_1-a(\ell))
 =M^{1/2}e_p(\ell_1-a(\ell)).
\end{align*}
To keep the times in $(0,1/2]$, we retain $a\in A_p\coloneqq\{1,\ldots,\lfloor p/(4\pi)\rfloor\}$ and let $B_N=\{2\pi a/p\mid a\in A_p\}$. Then $\#B_N=\#A_p\eqsim p\eqsim N$. Each value of $a(\ell)$ has exactly $p$ preimages, and hence
\[
 \#\Gamma_p=p\#A_p\eqsim p^2,
 \qquad\text{where}\qquad
 \Gamma_p\coloneqq\{\ell\in\F_p^2\mid a(\ell)\in A_p\},
\]
where $\Gamma_p$ contains the ``good'' grid points for which $t_{a(\ell)}\in B_N$. This shows that the good lattice points are not sparse. They constitute a fixed positive proportion of all $p^2$ lattice points.
We enlarge the grid points to small squares. Fix $0<\eta<1/10$ and let $Q_\ell=x_\ell+[-\eta/p,\eta/p]^2$. These squares are disjoint on $\T^2$. For $|h|_\infty\le\eta/p$, one has $x_\ell+h\in Q_{\ell}$. We now verify that after moving from the center point \(x_\ell\) to \(x_\ell + h\), although the Fourier waves are no longer completely in phase, the phase deviation is still very small. The preceding identity gives $|n_s\cdot h|\le\eta$ and
\begin{align*}
 |e^{it_{a(\ell)}\Delta}F_N(x_\ell+h)|
 &=M^{-1/2}\left|\sum_{s\in S}e^{in_s\cdot h}\right|\\
 &\ge M^{-1/2}\sum_{s\in S}\cos(n_s\cdot h)
 \ge (\cos\eta)M^{1/2}\gtrsim N^{1/2},
\end{align*} and the total normalized measure is
\begin{align*}
 \left|\bigcup_{\ell\in\Gamma_p}Q_\ell\right|
 &=\#\Gamma_p\left(\frac{\eta}{\pi p}\right)^2\gtrsim1.
\end{align*}
The primes $p\equiv1\pmod4$ are unbounded, and $N\eqsim p$. This proves the required assertion. In fact, by combining the above results, for infinitely many \(N\), we have constructed a function \(F_N\) and a time set \(B_N\) satisfying
\begin{gather*}
\|F_N\|_{L^2(\mathbb{T}^2)} = 1, \\
\operatorname{supp} \widehat{F}_N \subseteq \{n \in \mathbb{Z}^2 : N \le |n| \le 2N\},
\end{gather*}
and there exists a set \(E_N \subseteq \mathbb{T}^2\) of uniformly positive measure such that for every \(x \in E_N\), there exists \(t \in B_N \subset (0, 1/2]\) satisfying
\[
|e^{it\Delta} F_N(x)| \gtrsim N^{1/2}.
\]
Equivalently,
\[
\left| \left\{ x \in \mathbb{T}^2 : \sup_{t \in B_N} |e^{it\Delta} F_N(x)| \gtrsim N^{1/2} \right\} \right| \gtrsim 1.
\]
\end{proof}

\subsection{The construction in even dimensions}\label{sec:even}
Let $d=2m\ge4$. The affine phase calculation still gives many spatial centers. We need residue classes whose distinct centers remain separated at scale $N^{-1}$. The following lemma makes this choice.

\begin{lemma}[Separation modulo a prime]\label{lem:even}
For every sufficiently large prime $p\equiv1\pmod4$, there are subspaces $V\subseteq U\subseteq\F_p^{2m}$ and $r_0\in U\setminus V$ such that
\[
 V=V^\perp,\qquad \dim V=m,\qquad\text{and}\qquad U=V+\F_pr_0.
\]
The congruence lattice
\[
 \Lambda_V\coloneqq\{n\in\Z^{2m}\mid n\bmod p\in V\}
\]
has an orthogonal basis with every vector of length $\sqrt p$. There is a constant $\gamma_m>0$, independent of $p$, such that, for
\[
 N\coloneqq\lfloor p^{(m+1)/(2m)}\rfloor
 \qquad\text{and}\qquad R\coloneqq\gamma_m p/N,
\]
every nonzero $h\in\Z^{2m}$ with $h\bmod p\in U$ satisfies $|h|_\infty>R$.
\end{lemma}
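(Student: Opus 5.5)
The plan is to build $V$ as an orthogonal sum of $m$ copies of the 2D construction of \Subsecref{sub:two}. Then $r_0$ is chosen by counting: it must avoid the finitely many ``bad'' subspaces generated by $V$ and a short integer vector.

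\textbf{Step 1: the Lagrangian subspace $V$ and the lattice $\Lambda_V$.} Fix a prime $p\equiv1\pmod4$ and write $p=a^2+b^2$ with $a,b\in\Z$ and $p\nmid b$. Put $j\equiv ab^{-1}\pmod p$, so that $j^2\equiv-1$. Group the coordinates of $\F_p^{2m}$ into pairs $(2k-1,2k)$ for $k=1,\dots,m$. Let $w_k$ be the vector equal to $(j,1)$ in the $k$-th pair and $0$ elsewhere, and set $V=\operatorname{span}\{w_1,\dots,w_m\}$.

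Then $w_k\cdot w_l=\delta_{kl}(j^2+1)=0$, so $V\subseteq V^\perp$. Since $\dim V=m$ and the dot product is nondegenerate, $\dim V^\perp=2m-m=m$, which gives $V=V^\perp$.

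In the $k$-th pair, the integer vectors $(a,b)$ and $(-b,a)$ reduce into $\F_p(j,1)$. Indeed $a\equiv jb$, and $-b\equiv j^2b\equiv ja$. These two vectors are orthogonal, each has length $\sqrt p$, and together they span a lattice of covolume $p$. Placing them in every pair gives $2m$ mutually orthogonal vectors of length $\sqrt p$ inside $\Lambda_V$. Their lattice has covolume $p^m=[\Z^{2m}:\Lambda_V]$, so it equals $\Lambda_V$. In particular, every nonzero vector of $\Lambda_V$ has length at least $\sqrt p$.

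\textbf{Step 2: choice of $r_0$ by counting.} Let $R=\gamma_m p/N\eqsim\gamma_m p^{(m-1)/(2m)}$, and let $H$ be the set of nonzero $h\in\Z^{2m}$ with $|h|_\infty\le R$. For large $p$ we have $R<\sqrt{p/(2m)}$, so every $h\in H$ satisfies $|h|<\sqrt p$ and $|h|_\infty<p$. Hence $h\bmod p\ne0$, and by Step 1 also $h\bmod p\notin V$.

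For each $h\in H$, the space $W_h=V+\F_p h$ therefore has dimension $m+1$ and contains $p^{m+1}$ points. The union of the $W_h$ over $h\in H$ has at most
\[
(2R+1)^{2m}p^{m+1}\le C_m\gamma_m^{2m}p^{m-1}p^{m+1}=C_m\gamma_m^{2m}p^{2m}
\]
points. Choose $\gamma_m$ so small that $C_m\gamma_m^{2m}<1/2$. Then some $r_0\in\F_p^{2m}$ lies outside every $W_h$. Since $V\subseteq W_h$, this $r_0$ is not in $V$. Set $U=V+\F_pr_0$.

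\textbf{Step 3: separation.} Suppose $h\ne0$, $h\bmod p\in U$, and $|h|_\infty\le R$. Write $h\equiv v+cr_0$ with $v\in V$. If $c=0$, then $h\in\Lambda_V$, which contradicts Step 1. If $c\ne0$, then $r_0=c^{-1}(h-v)\in W_h$, which contradicts the choice of $r_0$. Hence every such $h$ has $|h|_\infty>R$.

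The main obstacle is Step 2, namely matching the exponents. The lattice $\Lambda_U$ has covolume $p^{m-1}$, so by Minkowski its shortest vector is at most of order $p^{(m-1)/(2m)}$. The choice $N=\lfloor p^{(m+1)/(2m)}\rfloor$ makes $R$ exactly this size, so the union bound is tight up to constants. This is why the counting closes only after taking $\gamma_m$ small, and why it needs the a priori fact from Step 1 that $\Lambda_V$ has no short vectors.
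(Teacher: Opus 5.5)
Your proof is correct and follows the paper's argument: the same totally isotropic $V=\Phi(\F_p^m)$ with the orthogonal basis $(a,b),(-b,a)$ in each coordinate pair, and the same counting principle, in which each short $h$ (which lies outside $V$ since $0<|h|^2<p$) rules out exactly one $(m+1)$-dimensional $U\supseteq V$, namely $V+\F_p h$. The difference is only in bookkeeping. The paper parametrizes the candidate spaces as $U=W_c^\perp$ by projective classes $[c]\in\mathbb{P}^{m-1}(\F_p)$ via $\lambda(h)$, and compares $\lesssim\gamma_m^{2m}p^{m-1}$ short vectors with $\ge p^{m-1}$ classes, whereas you count the points $r_0$ covered by the $p^{m+1}$-element spaces $W_h$, which is the same estimate multiplied by $p^{m+1}$.
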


\begin{proof}[Proof of \Propref{thm:lower} in even dimensions]
We take $V,U,r_0$ from \Lemref{lem:even}. Fix $0<\delta<1/(100d)$, depending only on $d$, and let
\begin{equation}\label{eq:box}
 \Omega_N\coloneqq[N,(1+\delta)N]\times[-\delta N,\delta N]^{d-1}
 \subseteq\{\xi\in\R^d\mid N\le|\xi|\le2N\}.
\end{equation}
For an integer lift $\widetilde r_0\in \Z^{2m}$ of $r_0$, let $$S_N=\Omega_N\cap(\widetilde r_0+\Lambda_V),\quad M_N=\#S_N.$$ Changing the lift does not change the affine lattice since
$p\Z^{2m}\subseteq\Lambda_V$. Choose a half-open fundamental cell containing zero, of diameter $\rho_p\lesssim_d\sqrt p$. Let $\Omega_N^+$ be the $\rho_p$-neighborhood of $\Omega_N$ and $\Omega_N^-$ its interior at distance greater than $\rho_p$ from the complement. The cells based at $S_N$ cover $\Omega_N^-$ and lie in $\Omega_N^+$. Their interiors are disjoint. We cover the boundary layer by $2d$ slabs of thickness $O_d(\sqrt p)$ and remaining side lengths $O_d(N)$. Thus
\begin{align*}
 \operatorname{vol}(\Omega_N^-)&\le M_Np^m\le\operatorname{vol}(\Omega_N^+),\\
 \operatorname{vol}(\Omega_N^+\setminus\Omega_N^-)
 &\lesssim_d N^{d-1}\sqrt p,
\end{align*}
and
\begin{align*}
 \left|M_N-\frac{\operatorname{vol}(\Omega_N)}{p^m}\right|
 &\lesssim_d\frac{N^{d-1}\sqrt p}{p^m},\\
 M_N&\eqsim_d\frac{N^{2m}}{p^m}\eqsim_dp.
\end{align*}
Here $\delta$ is fixed, $\operatorname{vol}(\Omega_N)\eqsim_d N^{2m}$ and $\sqrt p/N\to0$. The count is uniform in the affine translate. We now define
\begin{align*}
 F_N(x)&\coloneqq M_N^{-1/2}\sum_{n\in S_N}e^{in\cdot x},\\
 \|F_N\|_2^2&=M_N^{-1}\sum_{n\in S_N}1=1.
\end{align*}
For $a\in A_p\coloneqq\{1,\ldots,\lfloor p/(4\pi)\rfloor\}$ and $u\in V$, we choose
\[
 \ell\coloneqq2ar_0+u,\qquad x_\ell\coloneqq2\pi\ell/p,
 \qquad\text{and}\qquad t_a\coloneqq2\pi a/p.
\]
Every frequency in $S_N$ has the form $n\equiv r_0+v\pmod p$ with $v\in V$. Since $u\cdot v=v\cdot v=0$ modulo $p$, we obtain
\begin{align}
 \ell\cdot n-a|n|^2
 &\equiv(2ar_0+u)\cdot(r_0+v)-a(|r_0|^2+2r_0\cdot v)\notag\\
 &\equiv a|r_0|^2+u\cdot r_0\pmod p,\label{eq:phase}\\
 e^{it_a\Delta}F_N(x_\ell)
 &=M_N^{-1/2}\sum_{n\in S_N}e_p(a|r_0|^2+u\cdot r_0)
 =M_N^{1/2}e_p(a|r_0|^2+u\cdot r_0).\notag
\end{align}

It remains to count and separate these centers. Let $B_N=\{t_a\mid a\in A_p\}\subseteq(0,1/2]$ and $\Gamma_p=\{2ar_0+u\mid a\in A_p,\ u\in V\}$. As $r_0\notin V$, the cosets $2ar_0+V$ are distinct for $a\in A_p$. We obtain
\[
 \#\Gamma_p=\#A_p\,p^m\eqsim p^{m+1}\eqsim N^d.
\]
For $\ell\ne\ell'$, let $h$ be the balanced representative of $\ell-\ell'$. We have $h\ne0$ and $h\bmod p\in U$. Applying \Lemref{lem:even} gives
\[
 \operatorname{dist}_\infty(x_\ell,x_{\ell'})
 =\frac{2\pi}{p}|h|_\infty>\frac{2\pi R}{p}
 =\frac{2\pi\gamma_m}{N}.
\]
We choose $0<\eta_d<\min\{\pi\gamma_m,1/(10\sqrt d)\}$. The boxes $Q_\ell=x_\ell+[-\eta_d/N,\eta_d/N]^d$ are disjoint. For $|h|_\infty\le\eta_d/N$, as before, \eqref{eq:phase} yields
\begin{align*}
 |n\cdot h|&\le2\sqrt d\,\eta_d\qquad(n\in S_N),\\
 |e^{it_a\Delta}F_N(x_\ell+h)|
 &\ge M_N^{-1/2}\sum_{n\in S_N}\cos(n\cdot h)\ge\tfrac12M_N^{1/2}\gtrsim_dp^{1/2}\eqsim N^{d/(d+2)},\\
 \left|\bigcup_{\ell\in\Gamma_p}Q_\ell\right|
 &=\#\Gamma_p\left(\frac{\eta_d}{\pi N}\right)^d\gtrsim_d1.
\end{align*}
This proves the lower bound in even dimensions.
\end{proof}

Now we are left to prove \Lemref{lem:even}.
\begin{proof}[Proof of \Lemref{lem:even}]
By the two-square theorem, we write $p=a^2+b^2$ and take $j=ab^{-1}\in\F_p$. Let
\[
 \Phi(s)\coloneqq(js_1,s_1,\ldots,js_m,s_m)
 \qquad\text{and}\qquad V\coloneqq\Phi(\F_p^m).
\]
The identity $j^2=-1$ gives
\begin{align*}
 \Phi(s)\cdot\Phi(t)&=(j^2+1)\sum_{r=1}^ms_rt_r=0,\\
 \dim V&=m,\quad
 V^\perp=V.
\end{align*}
For one coordinate pair, define
\[
 \Lambda_0\coloneqq\{(x,y)\in\Z^2\mid bx-ay\equiv0\pmod p\}.
\]
The vectors $g_1=(a,b)$ and $g_2=(-b,a)$ lie in $\Lambda_0$, and
\begin{align*}
 g_1\cdot g_2&=0,\\
 |g_1|^2=|g_2|^2&=p,\\
 |\det(g_1,g_2)|&=p=[\Z^2:\Lambda_0],\\
 \Lambda_V=\Lambda_0^m&=(\Z g_1+\Z g_2)^m.
\end{align*}
This proves the assertion about the lattice basis.

We choose $U$ by excluding subspaces containing a short integer vector. For nonzero $c\in\F_p^m$, let $W_c=\{\Phi(s)\mid c\cdot s=0\}$. This subspace depends only on the projective class $[c]$. With $\lambda(h)=(jh_1+h_2,\ldots,jh_{2m-1}+h_{2m})$, every $0<|h|_\infty\le R$ satisfies
\begin{align*}
 0<|h|^2&\le2mR^2<p,\\
 h\bmod p&\notin V,\\
 \lambda(h)&\ne0,
\end{align*}
for sufficiently large $p$. Indeed, membership in $V$ would force $|h|^2\equiv0\pmod p$. It follows that
\begin{align*}
 h\bmod p\in W_c^\perp
 &\Longleftrightarrow \lambda(h)\cdot s=0\ \text{whenever }c\cdot s=0\\
 &\Longleftrightarrow [c]=[\lambda(h)].
\end{align*}
Thus each short vector excludes only one projective class. We first choose $\gamma_m>0$ small enough and then take $p$ large. Since $R\to\infty$,
\begin{align*}
 \#\mathbb P^{m-1}(\F_p)&=\frac{p^m-1}{p-1}\ge p^{m-1},\\
 \#\{h\in\Z^{2m}\mid0<|h|_\infty\le R\}
 &\le(2R+1)^{2m}
 \le C_m\gamma_m^{2m}p^{m-1}<\tfrac12p^{m-1}.
\end{align*}
We can choose $[c]$ for which $W_c^\perp$ contains none of these vectors. Taking $U=W_c^\perp$, we have
\[
 V\subseteq U,\qquad \dim U=m+1,
 \qquad\text{and}\qquad U=V+\F_pr_0
\]
for any $r_0\in U\setminus V$, as required.
\end{proof}

\subsection{The construction in odd dimensions}\label{sec:odd}
Let $d=2m+1\ge3$. We use the modulus $p^2$ to accommodate the remaining coordinate. The form of the construction is unchanged, but the separation argument must distinguish elements of order $p$ from those of order $p^2$.

\begin{lemma}[Separation modulo a prime square]\label{lem:odd}
For every sufficiently large prime $p\equiv1\pmod4$, let $q=p^2$. There are subgroups $V\subseteq U\subseteq(\Z/q\Z)^d$ and $r_0\in U$ such that
\begin{gather*}
 V=V^\perp,\qquad |V|=p^d,\qquad v\cdot w=0\quad(v,w\in V),\\
 U/V=\langle r_0+V\rangle\cong\Z/q\Z.
\end{gather*}
The lattice $\Lambda_V\coloneqq\{n\in\Z^d\mid n\bmod q\in V\}$ has an orthogonal basis with every vector of length $p$. There is a constant $\gamma_d>0$, independent of $p$, such that, for
\[
 N\coloneqq\lfloor p^{(d+2)/d}\rfloor
 \qquad\text{and}\qquad R\coloneqq\gamma_dq/N,
\]
every nonzero $h\in\Z^d$ with $h\bmod q\in U$ satisfies $|h|_\infty>R$.
\end{lemma}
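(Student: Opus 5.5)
We would argue as in \Lemref{lem:even}, replacing $\F_p$ by $\Z/q\Z$ with $q=p^2$. The steps are: build a self-dual subgroup $V$ of size $p^d$ with an explicit orthogonal lattice basis, pass to the quotient group $G=(\Z/q\Z)^d/V$, and choose $U$ as the preimage of a cyclic subgroup of order $q$ in $G$ that avoids the images of all short nonzero integer vectors. The new feature compared with the even case is that $G$ has elements of order $p$ as well as $q$, so the exclusion count has to be done separately for the two orders.

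\emph{Construction of $V$ and the lattice.} Write $p=a^2+b^2$ and set $A+iB=(a+ib)^2$. Then $A^2+B^2=q$ and $B=2ab$ is prime to $p$. Put $J=AB^{-1}\in\Z/q\Z$, so that $J^2\equiv-1\pmod q$. Let
\[
 V=\{(Js_1,s_1,\dots,Js_m,s_m,\,pt)\mid s\in(\Z/q\Z)^m,\ t\in\Z/p\Z\}.
\]
Then $|V|=q^mp=p^d$. The pair coordinates are mutually orthogonal because $J^2+1\equiv0$, and the last coordinate contributes $p\cdot p\equiv0$, so $V$ is self-orthogonal. Counting shows $|V^\perp|=q^d/|V|=p^d$, hence $V=V^\perp$. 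On each coordinate pair, the lattice $\{(x,y)\mid Bx-Ay\equiv0\pmod q\}$ has the orthogonal basis $(A,B),(-B,A)$, both of length $p$, with determinant $q$. On the last coordinate the lattice is $p\Z$. Together these give the claimed orthogonal basis of $\Lambda_V$ with all vectors of length $p$.

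\emph{The quotient and the choice of $U$.} The map $h\mapsto\bigl(\lambda(h),\,h_d\bmod p\bigr)$, with $\lambda(h)=(Jh_1+h_2,\dots,Jh_{2m-1}+h_{2m})$, identifies $G\cong(\Z/q\Z)^m\times\Z/p\Z$. Taking $U$ to be the preimage of a cyclic subgroup $C\le G$ of order $q$, with $r_0$ any lift of a generator, gives $U/V\cong\Z/q\Z$. It remains to choose $C$ containing no image of a short vector. The number of such $C$ is $\eqsim p^{2m+1}/\varphi(q)\eqsim p^{d-2}$. Let $0<|h|_\infty\le R=\gamma_dp^{(d-2)/d}$. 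Since $0<|h|^2\le dR^2<q$ and every $v\in V$ has $v\cdot v\equiv0\pmod q$, the image $g$ of $h$ in $G$ is nonzero.

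\emph{Counting exclusions, which is the main step.} If $g$ has order $q$, it lies in exactly one $C$. Such $h$ number at most $(2R+1)^d\lesssim\gamma_d^dp^{d-2}$, a small fraction of the available $C$ once $\gamma_d$ is small. If $g$ has order $p$, then $g$ lies in some $C$ only when $g\in pG\cong p(\Z/q\Z)^m$. This forces $h_d\equiv0\pmod p$, hence $h_d=0$ because $R<p$, and it forces $\lambda(h)\equiv0\pmod p$. For such $g$, the number of $C$ containing it is $\#\{x\mid px\in\langle g\rangle\setminus\{0\}\}/\varphi(q)\lesssim p^m$. The admissible $h$ lie in a sublattice of $\Z^{2m}$ of index $p^m$ with minimal length at least $\sqrt p$, so there are $\lesssim R^{2m}/p^m$ of them, and none at all if $R<\sqrt p$. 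These therefore exclude at most $\lesssim R^{2m}\le p^{2m(2m-1)/(2m+1)}=o(p^{d-2})$ classes. This order-$p$ case is the delicate point: a naive bound of $p^m$ classes for every order-$p$ vector, including the vectors $(0,\dots,0,h_d)$, fails when $d=3$, and the observation that $pG$ lives entirely in the $(\Z/q\Z)^m$ factor is what rescues the count. Summing both cases, a suitable $C$ exists for $p$ large and $\gamma_d$ small, and every nonzero $h$ with $h\bmod q\in U$ then satisfies $|h|_\infty>R$.
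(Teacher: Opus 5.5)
Your proposal is correct and follows the paper's proof essentially step for step. You use the same self-dual $V=L^m\times p\Z/q\Z$ built from $(a+ib)^2$, the same quotient $(\Z/q\Z)^m\times\F_p$, the same choice of an unexcluded cyclic subgroup of order $q$, and the same split of exclusions into order-$q$ images (one class each) and order-$p$ images, which must lie in $pG$ and so force $h_d=0$ and membership in the index-$p^m$ sublattice with minimum $\sqrt p$. The deviations are minor: you get $\pi(h)\neq0$ from $h\cdot h\equiv0\pmod q$ and $0<|h|^2<q$ rather than from the lattice minimum $p$; you obtain $V^\perp=V$ by counting rather than by direct computation; and your claim that there are no admissible $h$ if $R<\sqrt p$ should read $\sqrt2R<\sqrt p$, which holds for $m=1$ since $R\eqsim p^{1/3}$.
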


\begin{proof}[Proof of \Propref{thm:lower} in odd dimensions]
We take $V,U,r_0$ from \Lemref{lem:odd} and retain the box $\Omega_N$ in \eqref{eq:box}. For an integer lift $\widetilde r_0$ of $r_0$, let $S_N=\Omega_N\cap(\widetilde r_0+\Lambda_V)$ and $M_N=\#S_N$.
The fundamental cells have volume $p^d$ and diameter $O_d(p)$. The same inner and outer cell comparison as above gives
\begin{align*}
 M_N&=\frac{\operatorname{vol}(\Omega_N)}{p^d}
 +O_d\left(\frac{N^{d-1}p}{p^d}\right)\eqsim_dp^2=q,
\end{align*}
where $p/N\to0$. Define the polynomial
\begin{align*}
 F_N(x)&\coloneqq M_N^{-1/2}\sum_{n\in S_N}e^{in\cdot x},\\
 \|F_N\|_2^2&=M_N^{-1}\sum_{n\in S_N}1=1.
\end{align*}
Its Fourier support lies in $N\le|n|\le2N$. We take $A_q=\{1,\ldots,\lfloor q/(4\pi)\rfloor\}$, the times $B_N=\{2\pi a/q\mid a\in A_q\}\subseteq(0,1/2]$, and the centers $\Gamma_q=\{2ar_0+u\mid a\in A_q,\ u\in V\}$.
For $\ell=2ar_0+u$ with $a\in A_q$ and $u\in V$, let $x_\ell=2\pi\ell/q$ and $t_a=2\pi a/q$. The phase identity \eqref{eq:phase} uses only $u\cdot v=v\cdot v=0$, and hence also holds modulo $q$. Moreover,
\begin{align*}
 2ar_0+V=2a'r_0+V
 &\Longrightarrow 2(a-a')(r_0+V)=0
 \Longrightarrow a=a'\pmod q,\\
 \#\Gamma_q&=\#A_q\,|V|\eqsim p^{d+2}\eqsim N^d,\\
 |e^{it_a\Delta}F_N(x_\ell)|&=M_N^{1/2}\eqsim_dp.
\end{align*}
Here we used the order $q$ of $r_0+V$ and the invertibility of $2$ modulo $q$.

For distinct centers, their balanced difference is nonzero and belongs to $U$ modulo $q$. By \Lemref{lem:odd},
\[
 \operatorname{dist}_\infty(x_\ell,x_{\ell'})
 >2\pi R/q=2\pi\gamma_d/N.
\]
Fix $0<\eta_d<\min\{\pi\gamma_d,1/(10\sqrt d)\}$. The boxes $Q_\ell=x_\ell+[-\eta_d/N,\eta_d/N]^d$ are disjoint. For $|h|_\infty\le\eta_d/N$, we conclude that
\begin{align*}
 |n\cdot h|&\le2\sqrt d\,\eta_d\qquad(n\in S_N),\\
 |e^{it_a\Delta}F_N(x_\ell+h)|
 &\ge M_N^{-1/2}\sum_{n\in S_N}\cos(n\cdot h)\ge\tfrac12M_N^{1/2}\gtrsim_dp\eqsim N^{d/(d+2)},\\
 \left|\bigcup_{\ell\in\Gamma_q}Q_\ell\right|
 &=\#\Gamma_q\left(\frac{\eta_d}{\pi N}\right)^d\gtrsim_d1.
\end{align*}
This completes the construction.
\end{proof}

Now we prove \Lemref{lem:odd}.
\begin{proof}[Proof of \Lemref{lem:odd}]
By the two-square theorem, we write $p=\rho^2+\sigma^2$ and choose $A=\rho^2-\sigma^2$ and $B=2\rho\sigma$. Then $A^2+B^2=p^2=q$ and $p\nmid B$. With $j=AB^{-1}\in\Z/q\Z$, we have $j^2=-1$. We take $L=\{(js,s)\mid s\in\Z/q\Z\}$ and $P=p\Z/q\Z$, and define $V=L^m\times P$.
Direct calculation gives
\begin{align*}
 (x,y)\in L^\perp
 &\Longleftrightarrow jx+y=0\Longleftrightarrow(x,y)\in L,\\
 z\in P^\perp
 &\Longleftrightarrow pz=0\pmod {p^2}\Longleftrightarrow z\in P,
\end{align*}
and
\begin{align*}
 V^\perp&=V, \quad
 v\cdot w=0\quad(v,w\in V),\\
 |V|&=q^mp=p^d.
\end{align*}
For each coordinate pair, $(A,B)$ and $(-B,A)$ form an orthogonal basis of the congruence lattice. Their determinant and the lattice index both equal $q$. Together with the last-coordinate vector of length $p$, they give the asserted basis of $\Lambda_V$.

We next choose a cyclic subgroup in the quotient by $V$. Consider the surjective homomorphism
\[
 (\Z/q\Z)^d\xrightarrow{\ \pi\ }H\coloneqq(\Z/q\Z)^m\times\F_p,
\]
where
\[
 \pi(z)\coloneqq(jz_1+z_2,\ldots,jz_{2m-1}+z_{2m},z_d\bmod p).
\]
Its kernel is $V$. Let $\mathcal C$ be the family of cyclic subgroups of $H$ of order $p^2$. An element $(c,b)\in H$ has order $p^2$ exactly when $c\notin p(\Z/q\Z)^m$. Each such subgroup has $p(p-1)$ generators, whereas $H$ has $p(p^{2m}-p^m)$ elements of order $p^2$. Hence
\begin{equation}\label{eq:cyclic}
 \#\mathcal C
 =\frac{p(p^{2m}-p^m)}{p(p-1)}
 =\frac{p^{2m}-p^m}{p-1}\eqsim p^{2m-1}=p^{d-2}.
\end{equation}

We exclude any $C\in\mathcal C$ containing $\pi(h)$ for a vector $0<|h|_\infty\le R$. First, $\pi(h)\ne0$. Otherwise, the lattice basis just obtained would give
\[
 p\le|h|\le\sqrt d\,R<p,
\]
which is impossible for large $p$, since $R/p\to0$. If $\pi(h)$ has order $p^2$, it generates the only member of $\mathcal C$ containing it. For an element of order $p$, write $\pi(h)=(pa,b)$ with $a\in\F_p^m$ and $b\in\F_p$. We have
\begin{align*}
 \#\{C\in\mathcal C\mid(pa,b)\in C\}&=0\qquad(b\ne0),\\
 \#\{C\in\mathcal C\mid(pa,0)\in C\}
 &=\frac{(p-1)p^m p}{p(p-1)}=p^m\qquad(a\ne0).
\end{align*}
The first identity follows from the zero last coordinate of every order-$p$ element of a cyclic group of order $p^2$. For the second, a generator has first component reducing to a nonzero multiple of $a$ modulo $p$. There are $p-1$ choices of this multiple, $p^m$ lifts, and $p$ choices of the last component.

We must count the short vectors in the second case separately. Let their number be $D_p$. As $R<p$ for large $p$, they satisfy
\[
 h_d=0\qquad\text{and}\qquad
 jh_{2r-1}+h_{2r}\equiv0\pmod p\quad(1\le r\le m).
\]
Modulo $p$, our choice of $j$ reduces to
\[
 j=\frac{\rho^2-\sigma^2}{2\rho\sigma}
 =\frac{\rho}{\sigma},
\]
since $\rho^2+\sigma^2=0$ in $\F_p$. For each pair, the inverse-image lattice has orthogonal basis $(\rho,\sigma),(-\sigma,\rho)$, of length $\sqrt p$ and determinant $p$. Counting its product cells gives
\begin{align*}
 D_p&\lesssim_d(1+R/\sqrt p)^{2m},\\
 R&\eqsim\gamma_dp^{(2m-1)/(2m+1)}.
\end{align*}
When $m=1$, the minimum nonzero length $\sqrt p$ exceeds $\sqrt2R$, and hence $D_p=0$ for large $p$. When $m\ge2$, we have
\begin{align*}
 p^mD_p&\lesssim_d p^m+R^{2m},\\
 \frac{p^m}{p^{2m-1}}&=p^{1-m}\longrightarrow0,\\
 \frac{R^{2m}}{p^{2m-1}}
 &\lesssim_d p^{-(2m-1)/(2m+1)}\longrightarrow0.
\end{align*}
Thus the total number of excluded cyclic subgroups is bounded by
\begin{align*}
 (2R+1)^d+p^mD_p
 &\le C_d\gamma_d^dp^{d-2}+o(p^{d-2})\\
 &<\#\mathcal C,
\end{align*}
after we choose $\gamma_d$ sufficiently small and then $p$ sufficiently large. We take an unexcluded $C$ and let $U=\pi^{-1}(C)$. Any lift $r_0$ of a generator of $C$ has order $q$ modulo $V$. The choice of $C$ gives the required separation.
\end{proof}

\section{Supplementary estimates}\label{sec:log}

\subsection{The 2D frequency estimate}
We retain the logarithmic factor in the frequency upper bound obtained from Herr--Kwak \cite[\Extref{HK}{Theorem~1.1}]{HK}.

\begin{proposition}[The 2D frequency estimate]\label{prop:log}
For every $N\ge2$ and every trigonometric polynomial $f$ with Fourier support in $\{|n|\le2N\}$,
\begin{equation}\label{eq:log-upper}
 \left\|\sup_{0<t<1}|e^{it\Delta}f|\right\|_{L^2(\T^2)}
 \lesssim N^{1/2}(\log(2+N))^{1/4}\|f\|_2.
\end{equation}
For an unbounded sequence of $N$, the corresponding operator norm restricted to $N\le|n|\le2N$ is at least $cN^{1/2}$, both in $L^2$ and in weak $L^2$.
\end{proposition}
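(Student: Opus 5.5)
The upper bound will come from a Strichartz estimate plus a Bernstein inequality in time; the lower bound is essentially the 2D construction of \Propref{thm:lower}.

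\emph{Upper bound.} The starting point is the Herr--Kwak estimate \cite[Theorem~1.1]{HK}, which I take to be the sharp $L^4_{t,x}$ Strichartz inequality on $\T\times\T^2$ with logarithmic loss:
\[
\|e^{it\Delta}f\|_{L^4([0,1]\times\T^2)}\lesssim (\log(2+N))^{1/4}\|f\|_2
\qquad(\supp\widehat f\subseteq\{|n|\le 2N\}).
\]

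\textbf{Step 1: time localization.} Fix $x$. The function $t\mapsto e^{it\Delta}f(x)$ is a trigonometric polynomial in $t$ with frequencies $|n|^2\le 4N^2$. Take a Schwartz function $\psi$ with $\widehat\psi=1$ on $[-4,4]$ and set $\psi_N(t)=N^2\psi(N^2t)$. Then $e^{it\Delta}f(x)=\psi_N*_t e^{it\Delta}f(x)$.

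\textbf{Step 2: Bernstein in time.} Hölder in $t$ gives
\[
\sup_{0<t<1}|e^{it\Delta}f(x)|\lesssim N^{2/4}\Bigl(\int |e^{it\Delta}f(x)|^4 w_N(t)\,dt\Bigr)^{1/4},
\]
where $w_N$ is a weight comparable to $\mathbf 1_{[-1,2]}$ plus rapidly decaying tails. The tails are harmless: the evolution is $2\pi$-periodic in $t$, so each unit time interval contributes the same $L^4$ norm, and the weight has bounded total mass on unit intervals.

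\textbf{Step 3: integrate in space.} Since $\T^2$ has finite measure, $L^2_x\le L^4_x$. Combining this with Steps 1 and 2 gives
\[
\Bigl\|\sup_t|e^{it\Delta}f|\Bigr\|_{L^2_x}\le\Bigl\|\sup_t|e^{it\Delta}f|\Bigr\|_{L^4_x}\lesssim N^{1/2}\|e^{it\Delta}f\|_{L^4_{t,x}(\text{unit intervals})}.
\]
Inserting the Herr--Kwak bound yields \eqref{eq:log-upper}. The only point needing care is the bookkeeping between the time intervals, that is, verifying the periodic normalization of the torus in $t$ used in Step 2.

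\emph{Lower bound.} Apply the 2D case of \Propref{thm:lower}, which gives $F_N$ with $\|F_N\|_2=1$ and Fourier support in $N\le|n|\le2N$. It also gives a set of measure at least $b_2$ on which $\sup_{t\in B_N}|e^{it\Delta}F_N|\ge c_2N^{1/2}$, with $B_N\subseteq(0,1/2]\subseteq(0,1)$.

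\textbf{Weak $L^2$.} The definition \eqref{w-L2} gives a weak $L^2$ norm at least $c_2N^{1/2}b_2^{1/2}$.

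\textbf{Strong $L^2$.} The strong $L^2$ norm dominates the weak one, so the same lower bound holds.

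Hence the restricted operator norm is at least $cN^{1/2}$ for the unbounded sequence of scales $N\eqsim p$, with $p\equiv1\pmod 4$ prime. This step is immediate; I expect no obstacle beyond correctly quoting the form of \cite{HK}.
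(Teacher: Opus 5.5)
Your proposal is correct and follows the paper's argument. You reproduce $u$ by a time kernel at frequency scale $N^2$, apply H\"older with the kernel's $L^{4/3}$ norm $\eqsim N^{1/2}$, pass from $L^2_x$ to $L^4_x$, invoke Herr--Kwak, and take the lower bound from the 2D case of \Propref{thm:lower}. The only difference is that the paper periodizes the kernel, $K_N(t)=\sum_k\psi(k/N^2)e^{ikt}$ on $\R/2\pi\Z$, so H\"older runs over a single period with no tails, whereas your Schwartz kernel on $\R$ needs the tail bookkeeping you describe; there each unit interval is bounded by the full-period $L^4$ norm, not equal to it.
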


\begin{proof}[Proof of \Propref{prop:log}]
Let $u(t,x)=e^{it\Delta}f(x)$. Under the support assumption $|n|\le2N$, its time frequencies are integers in $[-4N^2,0]$. We choose $\psi\in C_c^\infty((-8,8))$ equal to one on $[-4,4]$ and define
\[
 K_N(t)\coloneqq\sum_{k\in\Z}\psi(k/N^2)e^{ikt}.
\]
Then, we obtain \begin{align*}
 |K_N(t)|&\lesssim_A
 N^2(1+N^2\operatorname{dist}(t,2\pi\Z))^{-A}
\end{align*}
and
\begin{align*}
 \|K_N\|_{L^{4/3}(\R/2\pi\Z)}&\lesssim N^{1/2}.
\end{align*}
Normalized convolution in time and periodization of the inverse Fourier transform of $\psi$ give
\begin{align*}
 (K_N*_tu)(t,x)
 &=\frac1{2\pi}\int_0^{2\pi}K_N(t-r)u(r,x)\dd r
 =u(t,x).
\end{align*}
We apply Herr--Kwak \cite[\Extref{HK}{Theorem~1.1}]{HK} to $S=\supp\widehat f$, for which $\#S\lesssim N^2$. H\"older's inequality then yields
\begin{align*}
 \left\|\sup_{0<t<1}|u(t)|\right\|_{L_x^2}&\le  \left\|\sup_{0<t<2\pi}|u(t)|\right\|_{L_x^4(\T^2)}
 \\&\le\|u\|_{L_x^4L_t^\infty([0,2\pi])}
 =\|K_N*_tu\|_{L_x^4L_t^\infty([0,2\pi])}\\
 &\le\|K_N\|_{4/3}\|u\|_{L^4_{t,x}([0,2\pi]\times\T^2)}\lesssim N^{1/2}(\log(2+N))^{1/4}\|f\|_2.
\end{align*}
For the reverse bounds, \Propref{thm:lower} in 2D gives
\[
 \left\|\sup_{0<t<1}|e^{it\Delta}F_N|\right\|_2
 \ge\left\|\sup_{0<t<1}|e^{it\Delta}F_N|\right\|_{2,\infty}
 \gtrsim N^{1/2}
\]
 along its sequence of annuli. This completes the proof.
\end{proof}

At the Sobolev endpoint, the power in this lower bound satisfies
\[
 \frac{N^{1/2}}{\|F_N\|_{H^{1/2}}}\eqsim1.
\]
A matching annular upper bound would still require an argument across frequency scales. Writing $P_j$ for the dyadic frequency projections, summing such bounds by the triangle inequality would give $\sum_j2^{j/2}\|P_jf\|_2$, whereas the $H^{1/2}$ norm involves the square sum. The endpoint estimate is therefore not decided by the preceding bounds.

\subsection{Shrinking time intervals} In this subsection, we obtain some results on shrinking time intervals.
Integer dilation gives the following consequence of \Propref{thm:lower}.

\begin{corollary}[Shrinking time intervals]\label{cor:shrinking}
Let $d\ge2$ and $0\le\alpha<2$. For arbitrarily large $M$, there is a trigonometric polynomial $G_M$ with $\|G_M\|_2=1$ and Fourier support in $M\le|n|\le2M$ such that
\[
 \left|\left\{x\in\T^d\ \middle|\
 \sup_{0<t<M^{-\alpha}}|e^{it\Delta}G_M(x)|
 \ge c_{d,\alpha}M^{\frac{d}{d+2}(1-\alpha/2)}\right\}\right|\ge b_d,
\]
where $c_{d,\alpha}$ and $b_d$ are positive and independent of $M$.
\end{corollary}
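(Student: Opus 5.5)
Take $G_M(x)=F_N(Kx)$, with $F_N$ and $B_N\subseteq(0,1/2]$ from \Propref{thm:lower} and an integer dilation factor $K\ge1$ to be fixed. By \eqref{eq:dilation}, $\|G_M\|_2=1$ and $e^{it\Delta}G_M(x)=(e^{iK^2t\Delta}F_N)(Kx)$. The Fourier support of $G_M$ lies in $\{KN\le|n|\le2KN\}$, so we set $M\coloneqq KN$.

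The times at which $G_M$ is large are $K^{-2}B_N\subseteq(0,K^{-2}/2]$. These lie in $(0,M^{-\alpha})$ as soon as $K^2\ge M^{\alpha}=K^\alpha N^\alpha$, that is, $K^{2-\alpha}\ge N^{\alpha}$. Since $\alpha<2$, we may take
\[
K\coloneqq\big\lceil N^{\alpha/(2-\alpha)}\big\rceil .
\]
Then $M=KN\eqsim_\alpha N^{2/(2-\alpha)}$, so $N\eqsim_\alpha M^{1-\alpha/2}$. The lower bound from \Propref{thm:lower} then reads
\[
c_dN^{\frac d{d+2}}\eqsim_{d,\alpha} M^{\frac d{d+2}(1-\alpha/2)}.
\]
The case $\alpha=0$ is just $K=1$.

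For the measure, the map $x\mapsto Kx$ preserves normalized Haar measure on $\T^d$. Hence the set of $x$ with
\[
\max_{t\in K^{-2}B_N}|e^{it\Delta}G_M(x)|\ge c_dN^{d/(d+2)}
\]
is the preimage under $x\mapsto Kx$ of the set in \eqref{eq:lower}, and so has measure at least $b_d$. Replacing the supremum over $0<t<M^{-\alpha}$ by this maximum over the finite set can only decrease it, which gives the claim with $c_{d,\alpha}$ absorbing the implicit constant. Because the scales $N$ are unbounded, so are the scales $M$.

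The only delicate point is bookkeeping. The ceiling in $K$ must change $M$ only by a bounded factor, which holds since $K\ge1$ and $K\le2N^{\alpha/(2-\alpha)}$. The statement's annulus $M\le|n|\le2M$ matches the dilated annulus exactly. No further obstacle is expected.
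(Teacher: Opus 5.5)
Your proposal is correct and follows the paper's proof: the same dilation $G_M=F_N(K\cdot)$ with $K=\lceil N^{\alpha/(2-\alpha)}\rceil$ and $M=KN$, the inclusion $K^{-2}B_N\subseteq(0,M^{-\alpha})$, and the invariance of Haar measure under $x\mapsto Kx$. Your bookkeeping of $M\eqsim_\alpha N^{2/(2-\alpha)}$ and of the Fourier support also matches the paper.
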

\begin{remark}
In 2D, the power is $1/2-\alpha/4$. We do not assert a matching upper bound for intermediate time scales.
\end{remark}

\begin{proof}[Proof of \Corref{cor:shrinking}]
We take $F_N$ from \Propref{thm:lower} and apply \eqref{eq:dilation} with
\begin{align*}
 K\coloneqq\lceil N^{\alpha/(2-\alpha)}\rceil,\quad
 M\coloneqq KN,\quad
 G_M(x)\coloneqq F_N(Kx).
\end{align*}
The choice of $K$ gives
\begin{align*}
 K^{2-\alpha}\ge N^\alpha, \quad
 K^{-2}\le M^{-\alpha},
 \end{align*}
 so we obtain
 \begin{align*}
 K^{-2}B_N\subseteq(0,M^{-\alpha}),\quad\|G_M\|_2=1,
  \end{align*}
and
 \begin{align*}
 M\eqsim_\alpha N^{2/(2-\alpha)},\quad
 N^{\frac{d}{d+2}}\eqsim_{d,\alpha}M^{\frac{d}{d+2}(1-\alpha/2)}.
\end{align*}
Choosing $c_{d,\alpha}>0$ sufficiently small and using the preservation of Haar measure, we conclude that
\begin{align*}
 &\left|\left\{x\in \T^d\ \middle|\ \sup_{0<t<M^{-\alpha}}|e^{it\Delta}G_M(x)|
 \ge c_{d,\alpha}M^{\frac{d}{d+2}(1-\alpha/2)}\right\}\right|\\
 &\qquad\ge
 \left|\left\{x\in \T^d \middle|\ \max_{\tau\in B_N}
 |e^{i\tau\Delta}F_N(Kx)|\ge c_dN^{\frac{d}{d+2}}\right\}\right|
 \ge b_d.\qedhere
\end{align*}
\end{proof}

\medskip

\medskip


\medskip
\medskip

\noindent\textbf{Conflicts of Interest}\space The authors declare that there are no conflicts of interest.

\medskip

\noindent{\bf AI disclosure.}
We used ChatGPT and Codex to discuss proof strategies, draft and check parts of the arguments, organize references, and improve the clarity and readability of the exposition. The authors take responsibility for the content of this manuscript.

\enlargethispage{3\baselineskip}
\end{document}